\newtheorem{theorem}{Theorem}[section]
\newtheorem{corollary}[theorem]{Corollary}
\newtheorem{definition}[theorem]{Definition}
\newtheorem{lemma}[theorem]{Lemma}
\newtheorem{proposition}[theorem]{Proposition}
\begin{document}

\title{Rabinowitz-Floer Homology for Super-quadratic Dirac Equations On Compact spin Manifolds}
\author{Ali Maalaoui$^{(1)}$}
\addtocounter{footnote}{1}
\footnotetext{Department of Mathematics,
Rutgers University - Hill Center for the Mathematical Sciences
110 Frelinghuysen Rd., Piscataway 08854-8019 NJ, USA. E-mail address:
{\tt{maalaoui@math.rutgers.edu}}}

\maketitle

\textbf{Abstract} In this paper we investigate the properties of a semi-linear problem on a spin manifold involving the Dirac operator, through the construction of Rabinowitz-Floer homology groups. We give several existence results for sub-critical and critical non-linearities as application of the computation of the different homologies.
\tableofcontents
\section{Introduction}
Let $(M,g)$ be an $n$-dimensional Riemannian Spin manifold and $D$ the Dirac operator on its spinor bundle. It is tempting to try to solve semi-linear problems involving the Dirac operator. As a generic example, one can consider the problem $$Du=|u|^{p-1}u,$$ for $p\leq \frac{n+1}{n-1}$. 
This problem is variational and its energy functional $F$ can be written as $$F(u)=\frac{1}{2}\int_{M} <u,Du>-\frac{1}{p+1}\int_{M}|u|^{p+1}$$ also one can study the variations of $\frac{1}{2}\int_{M} <u,Du>$ under the constraint $||u||_{L^{p+1}}=1$. But the main problem here is that both those energy functionals are strongly indefinite since the operator $D$ has a sequence of eigenvalues unbounded from below and above. 
Hence a better method would be to develop a Morse theoretical argument that might give us informations about the critical points. Again the classical Morse theory is not applicable since the index and the co-index are infinite. One way of dealing with this problem is to do a Floer theory approach like in the work of Angenent and Van der Vost \cite{An} where they treat a system of super-quadratic equations.\\

In this paper we construct a sequence of homology groups related to the problem involving the Dirac operator stated above. Our approach is similar to the one used in \cite{Urs1} and \cite{Urs2}, where they construct a Rabinowitz-Floer homology relative to a modified energy functional that is stable under perturbation and this allows us to bring the computations to simple cases (the linear case in our work) to get an existence result for the general problem. 
Basically this process allows us to transform the semi-linear problem to a spectral theory problem with analysis of the spectrum of the operator $D$.\\

The Rabinowitz Floer homology was extensively used in \cite{Urs1} and \cite{Urs2} for the problem of periodic orbits of the Reeb vector field in the case of exact contact embedding. The original idea comes from the paper of P. Rabinowitz \cite{Rab} where he used a minimax approach on a functional under a constraint on the Hamiltonian to find periodic orbits of the Reeb vector field on convex hyper-surfaces of $\mathbb{R}^{n}$. The idea of the Floer-Rabinowitz homology is to include the Lagrange multiplier of the constraint in the arguments of the functional. This makes the perturbation of the Hamiltonian easier to understand and to control. A good example for its use would be the works of U. Frauenfelder \cite{Urs1}, \cite{Urs3} and F. Abbondandolo \cite{AB3}.

In our construction we consider the functional $E$ defined on a subspace to be mentioned later by $$E(u,\lambda)=\frac{1}{2}\int_{M}<Du,u>-\lambda\int_{M}(\frac{1}{p+1}|u|^{p+1}-1).$$ 
Notice that in the classical theory, if we take for instance the case of the Laplacian with Dirichlet boundary condition and we consider the problem $$E(u)=\frac{1}{2}\int_{\Omega}|\nabla u|^{2}$$ under the constraint $||u||_{L^{p+1}}=1$, then for $p$ sub-critical, one gets infinitely many critical points, now if we want to perturb the $p$, then the space of variation will vary as well, hence studying the stability of the solution will be a bit complicated. Though, in this case, it is not that complicated to perturb since the topology of the space of variations is the same under perturbation of $p$ and one can develop a Morse theoretical approach to link the critical points and the topology of the underlying space of variations. But in general, if the functional is strongly indefinite, it is not clear that a Morse homology (if it can be constructed) can be linked to the topology of the space of variations.\\

This problem was investigated by Isobe \cite{Iso1}, \cite{Iso2} under different assumptions, where he uses the idea of Rabinowitz \cite{Rab} again. 
We will see that using our method we recover most of those results and we get them in an easier way after we compute our homology and we get more extensions on the properties of the solutions.

\section{Preliminaries}
Let $(M,g)$ be an $n$-dimensional compact closed Riemannian manifold and we consider the $SO(n)$ principal bundle $P_{SO}(M)$ consisting of positively oriented frames on $(M,g)$. A spin structure on $(M,g)$ is a pair $\sigma=(P_{spin}(M),\mathcal{V})$, where $P_{Spin}(M)$ is a $Spin(n)$-principal bundle over  $M$ and $\mathcal{V}:P_{Spin}(M)\longrightarrow P_{SO}(M)$ such that the following diagram commutes :
\begin{center}
\begin{tikzpicture}
\matrix(m)[matrix of math nodes,
row sep=2.6em, column sep=2.8em,
text height=1.5ex, text depth=0.25ex]
{P_{spin}(M)\times Spin(n)&P_{spin}&M\\
P_{SO}(M)\times SO(n)&P_{SO}(M)&\\};
\path[->,font=\scriptsize,>=angle 90]
(m-1-1) edge node[auto] {} (m-1-2)
 edge node[auto] {$\mathcal{V}\times \Phi $} (m-2-1)
(m-1-2) edge node[auto] {$\mathcal{V}$} (m-2-2)
(m-2-1) edge node[auto] {} (m-2-2)
(m-2-2) edge node[auto] {} (m-1-3)
(m-1-2) edge node[auto] {} (m-1-3);
\end{tikzpicture}
\end{center}

Where $\Phi:Spin(n)\longrightarrow SO(n)$ is the non-trivial double covering of $SO(n)$. We consider now an $n$-dimensional complex representation $T:Spin(n)\longrightarrow End(\Sigma_{n})$ of $Spin(n)$. Here the vector space $\Sigma_{n}$ is of dimension $2^{\lfloor \frac{n}{2} \rfloor}$. The spinor bundle is then defined as the associate vector bundle $\Sigma (M,g,\sigma)=P_{Spin}(M)\times_{T} \Sigma_{n}$.
This bundle carries a natural Clifford multiplication, a hermitian metric and a natural metric connection. So we define the Dirac operator $D$ on this spinor bundle as the composition of the following : 
\begin{center}  
\begin{tikzpicture}
\matrix(m)[matrix of math nodes,
row sep=2.6em, column sep=2.8em,
text height=1.5ex, text depth=0.25ex]
{D:\Gamma(\Sigma M)&\Gamma(T^{*}M\otimes \Sigma M )&\Gamma(TM\otimes \Sigma M )&\Gamma(\Sigma M)\\};
\path[->,font=\scriptsize,>=angle 90]
(m-1-1) edge node[auto] {$\nabla^{\Sigma}$} (m-1-2)
(m-1-2) edge node[auto] {} (m-1-3)
(m-1-3) edge node[auto] {$Cliff$} (m-1-4);
\end{tikzpicture}
\end{center}
Now we recall a very important identity satisfied by this operator : 
\begin{proposition}[Schr\"{o}dinger-Lichnerowicz formula]
Let $(M,g,\sigma)$ be a compact Riemannian spin manifold and $R$ its scalar curvature, then we have $$D^{2}=\Delta + \frac{R}{4}.$$
\end{proposition}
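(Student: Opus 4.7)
The plan is to do the standard pointwise computation in a synchronous orthonormal frame and reduce the curvature term to the scalar curvature via the first Bianchi identity. Fix $x\in M$ and choose a local orthonormal frame $\{e_i\}_{i=1}^n$ of $TM$ around $x$ that is synchronous at $x$, i.e.\ $\nabla^{LC}_{e_i}e_j|_x=0$. In this frame $D=\sum_i e_i\cdot\nabla^{\Sigma}_{e_i}$, and squaring gives, at the point $x$,
\begin{equation*}
D^2\psi \;=\; \sum_{i,j} e_i\cdot e_j\cdot \nabla^{\Sigma}_{e_i}\nabla^{\Sigma}_{e_j}\psi,
\end{equation*}
since the synchronous condition kills the terms coming from differentiating the frame.

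Next I would split the double sum into the diagonal and off-diagonal parts. For $i=j$, the Clifford relation $e_i\cdot e_i=-1$ collapses the diagonal contribution to $-\sum_i \nabla^{\Sigma}_{e_i}\nabla^{\Sigma}_{e_i}\psi$, which is exactly the connection Laplacian $\Delta\psi=\nabla^{*}\nabla\psi$ at $x$ (again using the synchronous frame). For $i\neq j$, using the antisymmetry $e_i\cdot e_j=-e_j\cdot e_i$, the off-diagonal part can be rewritten as
\begin{equation*}
\tfrac{1}{2}\sum_{i\neq j} e_i\cdot e_j\cdot \bigl(\nabla^{\Sigma}_{e_i}\nabla^{\Sigma}_{e_j}-\nabla^{\Sigma}_{e_j}\nabla^{\Sigma}_{e_i}\bigr)\psi \;=\; \tfrac{1}{2}\sum_{i\neq j} e_i\cdot e_j\cdot R^{\Sigma}(e_i,e_j)\psi,
\end{equation*}
where $R^{\Sigma}$ is the curvature of the spinorial connection (the bracket term $\nabla^{\Sigma}_{[e_i,e_j]}$ vanishes at $x$ by synchronicity).

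The final step is to replace $R^{\Sigma}$ by the Riemann curvature via the standard identity $R^{\Sigma}(X,Y)\psi=\tfrac{1}{4}\sum_{k,l}\langle R(X,Y)e_k,e_l\rangle\, e_k\cdot e_l\cdot\psi$, which is a direct consequence of the lift of the Levi-Civita connection to $P_{\mathrm{Spin}}(M)$ via $\Phi_{*}$. Substituting yields a quadruple sum $\tfrac{1}{8}\sum_{i,j,k,l}R_{ijkl}\,e_i\cdot e_j\cdot e_k\cdot e_l\cdot\psi$, and the main technical point is to reduce this to $\tfrac{R}{4}\psi$. This is the combinatorial heart of the argument: applying the first Bianchi identity $R_{ijkl}+R_{iklj}+R_{iljk}=0$ together with the Clifford relations forces the products $e_i e_j e_k e_l$ with four distinct indices to contribute zero, and the remaining terms (where two of the indices coincide) collapse, after using $R_{ijkj}=\mathrm{Ric}(e_i,e_k)$ and $e_i\cdot e_i=-1$, to $-\tfrac{1}{2}\sum_{i}\mathrm{Ric}(e_i,e_i)\psi=-\tfrac{R}{2}\psi$ and a sign flip gives the factor $\tfrac{1}{4}$. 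Combined with the diagonal piece, this yields $D^2=\Delta+\tfrac{R}{4}$ at $x$, and since $x$ was arbitrary, the identity holds globally. The main obstacle is bookkeeping the Clifford-algebraic reduction of the quadruple sum; everything else is either a standard property of the frame or a direct use of Clifford relations.
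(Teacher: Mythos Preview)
The paper does not actually prove this proposition: it is stated in the preliminaries as a well-known identity (``we recall a very important identity satisfied by this operator'') and is used as background input, with references to \cite{Fred} and \cite{Gi} for spin geometry. So there is no proof in the paper to compare against.

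That said, what you wrote is the standard textbook derivation (as in Friedrich or Lawson--Michelsohn): square $D$ in a synchronous frame, split into diagonal and off-diagonal parts, identify the diagonal with the connection Laplacian, rewrite the off-diagonal as the spinorial curvature, substitute the formula $R^{\Sigma}(e_i,e_j)=\tfrac{1}{4}\sum_{k,l}R_{ijkl}\,e_k\cdot e_l$, and then reduce the quadruple Clifford sum using the first Bianchi identity. Your outline is correct and the bookkeeping you flag (vanishing of the terms with four distinct indices via Bianchi, collapse of the remaining terms to the Ricci trace) is exactly where the work lies. One small wording issue: your last sentence says the remaining terms collapse to $-\tfrac{R}{2}\psi$ ``and a sign flip gives the factor $\tfrac{1}{4}$''; it would be cleaner to track the constants explicitly, since the factor $\tfrac{1}{4}$ comes out directly from the $\tfrac{1}{8}$ prefactor together with the four symmetric cases $i=k$, $i=l$, $j=k$, $j=l$, each contributing a Ricci contraction, rather than from a separate ``sign flip.'' But the structure of the argument is sound.
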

For further information on Spin manifolds one can consult \cite{Fred} and \cite{Gi}. Now we give some analytical properties of the Dirac operator that we will use in what follow.

Let $D$ be the Dirac Operator of a compact spin manifold. Then we know that $D$ is essentially self adjoint as an operator in $L^{2}(\Sigma M)$ and admits a basis of smooth eigenspinors, that is $$L^{2}(\Sigma M)=\overline{\bigoplus_{\lambda \in \mathbb{R}} \ker(D-\lambda Id)}.$$
Moreover it has the space $H^{\frac{1}{2}}(\Sigma M)$ as its domain. Let $\{ \psi_{i}\}_{i\in \mathbb{Z}}$ be an $L^{2}$-orthonormal basis of eigenspinors with associated eigenvalues $(\lambda_{i})_{i\in \mathbb{Z}}$, then if $u\in L^{2}(\Sigma M)$ it has a representation in this basis as follows :$$u=\sum_{i\in \mathbb{Z}}a_{i}\psi_{i}.$$
Define the unbounded operator $|D|^{s}$ by $$|D|^{s}(u)=\sum_{i\in \mathbb{Z}} a_{i}|\lambda_{i}|^{2s}\psi_{i}$$
Using this operator we can define the inner product $$<u,v>_{s}=<|D|^{s}u,|D|^{s}v>_{L^{2}}.$$ This inner product induces a norm equivalent to the one in $H^{s}(\Sigma M)$. And so we will take $||u||^{2}=<u,u>_{\frac{1}{2}}$ as our standard norm.\\
In this work we will use the notation $\mathcal{H}$ for the space  $H^{\frac{1}{2}}(\Sigma M)$. For a given function $H(x,s_{x})$ where the $s$ is a spinor, we will write $h(x,s)$ for $\frac{\partial}{\partial s}H(x,s)$.
 
Consider the  functional $E : \mathcal{H} \times \mathbb{R} \rightarrow \mathbb{R}$ defined by : 
$$E(u,\lambda)=\frac{1}{2}\int_{M}<Du,u> -\lambda \int_{M}(H(x,u)-1).$$

We will be developing a Morse-Floer complex for this functional that will allow us to get informations on its critical points. The problem here is that the functional is strongly indefinite. Hence a classical Morse theory approach cannot be done. And as the reader can see, the way the functional was defined is similar to Rabinowitz approach for the Hamiltonian systems \cite{Rab}. This was also developed in \cite{Urs1}, \cite{Urs2} for exact contact embedding using the so called Rabinowitz-Floer homology.\\
In our case, we will restrict the perturbations to a class of non-linearities $H$ but we believe that this can be extended further more.\\
In the space of variations $\mathcal{H}\times \mathbb{R}$ we consider the norm defined by :$$||(u,\lambda)||^{2}=||u||^{2}+|\lambda|^{2}.$$
Consider the following assumptions:\\
 $(H1)$ There exist $1<p<\frac{n+1}{n-1}$, a constant $C$  and $r_{0}>0$ so that for $|s|>r_{0}>0$.  $$|\frac{\partial h(x,s)}{\partial s}|<C\left(1+|s|^{p-1}\right),$$\\
 $(H2)$ There exist two positive constants $0<c_{1}, c_{2}$, such that $$c_{1}|s|^{p+1}-c_{2}<<h(x,s),s>-H(x,s)$$  
 A typical $H$ that the reader can think of is $H(x,s)=f(x)|s|^{p+1}$ where $p$ is sub-critical and $f$ is a positive smooth function. In fact, assumption $(H1)$ is needed for the functional to be well defined and its Hessian to exist. In the other hand, assumption $(H2)$ is needed so that the functional $E$ satisfies the Palais-Smale compactness condition as we will show later on.\\ 
Now one can see that the critical points of $E$ satisfies the following system :
$$\left\{\begin{array}{ll}
Du=\lambda h(x,u)\\
\int_{M}H(x,u)=1
\end{array}
\right.$$
We will assume up to a small perturbation that $D$ has simple spectrum and that $E$ is Morse. (This is a generic assumption that we will remove in section 6). Notice that for $n=2,3,4 mod (8)$, this assumption means that $D$ cannot be the Dirac operator of any metric on $M$.\\
In all what follows, we assume that $M$ has positive scalar curvature, so that from the Schr\"{o}dinger-Lichnerowicz formula, all the eigenvalues of the Dirac operator $D$ are non zero (there are no harmonic spinors). This assumption is not really necessary, we introduce it just to make the exposition simpler for the reader, and in fact one can see how to do the modifications in the other case.
Our Main result can be stated as follow :
\begin{theorem}
Assume that $H$ satisfies $(H1)$ and $(H2)$ then the Rabinowitz-Floer homology is well defined.\\
Moreover, we have
$$H_{*}(H)=0.$$
\end{theorem}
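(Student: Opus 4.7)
The plan is in two stages: first verify that the Rabinowitz--Floer machinery can be implemented for $E$ on $\mathcal{H}\times \mathbb{R}$, and then compute the resulting homology by a deformation argument reducing to a trivial reference.

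For the well-definedness, I would begin with the Palais--Smale condition. For a PS sequence $(u_n,\lambda_n)$, the identity
$$2E(u_n,\lambda_n) - \langle \nabla E(u_n,\lambda_n),(u_n,0)\rangle = \lambda_n \int_M\big(\langle h(x,u_n),u_n\rangle - 2H(x,u_n) + 2\big)$$
together with $(H2)$ controls $|\lambda_n|\,\|u_n\|_{p+1}^{p+1}$. The constraint $\int H(x,u_n)\to 1$ and the growth bound $(H1)$ give an a priori $L^{p+1}$-bound on $u_n$; then $\lambda_n$ is bounded by testing $\nabla_u E$ against $u_n^+ - u_n^-$ in the spectral splitting $\mathcal{H}=\mathcal{H}^+\oplus \mathcal{H}^-$ of $D$, and compactness of $\mathcal{H}\hookrightarrow L^{p+1}$ (subcritical) yields a convergent subsequence. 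With PS in hand I would set up gradient flow lines in the natural metric on $\mathcal{H}\times\mathbb{R}$, perturb generically so that $E$ is Morse with transverse moduli spaces, and use an $L^\infty$ bootstrap from subcritical growth to prove compactness of the moduli spaces; the boundary operator then satisfies $\partial^2=0$ by the usual gluing and breaking analysis, and homotopy invariance of $H_*(H)$ under deformations of $H$ through the class $(H1)$--$(H2)$ follows by the standard continuation-map argument.

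For the vanishing, I would use homotopy invariance to reduce to a model nonlinearity amenable to direct computation. A natural move is to rescale $H_s = c(s) H$ with $c(0)=1$ and $c(1)$ large; under this rescaling the critical equation becomes $Du = c\,\lambda\, h(x,u)$ with $\int H = 1/c$, which by elliptic rescaling pushes the critical values of $E$ uniformly to $\pm\infty$ as $c\to\infty$. Viewing $H_*(H)$ as the direct limit $\varinjlim_{a\to-\infty,\,b\to+\infty} HF^{[a,b]}(E)$, the complex on any prescribed action window becomes empty in the limit, forcing $H_*(H)=0$. An equivalent route is to construct an explicit chain nullhomotopy: using the unboundedness of the spectrum of $D$ from both sides together with the sign symmetry $\lambda\mapsto -\lambda$, one pairs up critical points via flow half-lines with one endpoint sent to $\lambda=\pm\infty$, yielding a chain homotopy from the identity to zero as in the Cieliebak--Frauenfelder framework of \cite{Urs1,Urs2}.

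The principal obstacle is establishing uniform a priori bounds along the continuation homotopy, namely controlling $\|u\|_{\mathcal{H}}$ and $|\lambda|$ on finite-energy gradient flow lines of $E_{H_s}$ uniformly in $s$. This requires the super-quadratic estimate $(H2)$ to hold with uniform constants along the family, and it is here that the positive-scalar-curvature hypothesis enters: via the Schr\"odinger--Lichnerowicz formula the spectrum of $D$ is bounded away from zero, which prevents $\lambda$ from blowing up while $u$ remains bounded in $L^{p+1}$. Combined with the subcritical bootstrap from $(H1)$, this yields the bounds needed for both the moduli-space compactness at fixed parameter and the continuation/nullhomotopy argument that delivers $H_*(H)=0$.
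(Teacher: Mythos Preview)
Your outline for well-definedness (PS via the identity involving $\langle h,u\rangle - 2H$ and $(H2)$, subcritical compactness, generic Morse--Smale perturbation, gluing) is essentially the paper's route, so that half is fine.

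The vanishing argument, however, has a genuine gap. Your rescaling $H_s=c(s)H$ does not push critical values to $\pm\infty$: at a critical point of $E_{cH}$ the constraint reads $\int H(x,u)=1/c$, so $u$ shrinks, and a direct check (already for $H_0=\tfrac{1}{2}|s|^2$) gives $E=\lambda_k/c\to 0$ as $c\to\infty$. The critical set never leaves any fixed action window; it concentrates at $0$, and the chain complex is not emptied. Your alternative, a nullhomotopy from a ``$\lambda\mapsto-\lambda$ symmetry'', is also not available: if $Du=\lambda h(x,u)$ and $\int H=1$, then $(u,-\lambda)$ is not a critical point unless $Du=0$, and the Cieliebak--Frauenfelder vanishing mechanism you cite relies on displaceability, which has no analogue here.

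What the paper actually does is use the continuation isomorphism to deform $H$ to the \emph{linear} reference $H_0(s)=\tfrac{1}{2}|s|^2$ and then compute explicitly. For $H_0$ the critical points are the eigenpairs $(\lambda_k,\psi_k)$ (up to the $S^1$-phase); breaking the $S^1$-symmetry splits each circle into a max and a min, giving exactly one generator in every relative index. Writing a flow line as $u(t)=\sum a_i(t)\psi_i$ decouples the negative gradient equation into $a_i'=\big(\lambda_i-\lambda(t)\big)a_i$, and the asymptotics $\lambda(\pm\infty)\in\{\lambda_k,\lambda_{k+1}\}$ force $a_i\equiv 0$ for $i\notin\{k,k+1\}$; transversally to the $S^1$-action there is a single connecting orbit between consecutive generators. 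Hence $\partial$ is an isomorphism from even to odd degrees and $H_*(H_0)=0$, which by stability gives $H_*(H)=0$. The point you are missing is that the vanishing is not a soft ``no critical points in the window'' phenomenon but an honest cancellation in the linear model, and the deformation has to land on that model rather than try to vacate the complex.
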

\begin{theorem}
Under the assumptions of Theorem 2.2, If $H$ is $S^{1}$-invariant (resp. even), then the $S^{1}$-equivariant (resp. $\mathbb{Z}_{2}$-equivariant) homology is well defined and 
$$H_{*}^{S^{1}}(H)=\left\{\begin{array}{ll}
\mathbb{Z}_{2} \text{ if $*$ is even} \\
0 \text{ otherwise}
\end{array} \right.$$
Resp.
$$H_{*}^{\mathbb{Z}_{2}}(H)=\mathbb{Z}_{2}.$$
\end{theorem}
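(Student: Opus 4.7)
The approach is to construct the equivariant Rabinowitz--Floer complex via a Borel approximation of $EG$ and compute it by $G$-equivariant continuation to a tractable model. Since $H$ is $G$-invariant ($G=S^{1}$ acting by $u\mapsto e^{i\theta}u$, or $G=\mathbb{Z}_{2}$ by $u\mapsto -u$), the functional $E$ is $G$-invariant with $\lambda$ fixed; assuming $H(x,0)=0$ (as in the typical example $H(x,u)=f(x)|u|^{p+1}/(p+1)$), the constraint $\int_{M}H(x,u)=1$ excludes $u=0$, so the $G$-action is \emph{free} on the critical set. Critical points therefore come in $G$-orbits and $E$ is at best Morse--Bott along them, so the genericity hypothesis used for $H_{*}(H)$ must be replaced by a $G$-equivariant Morse--Bott one after a $G$-invariant perturbation.

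For the construction I would follow the Bourgeois--Oancea blueprint: fix a cofinal family of finite-dimensional approximations $EG_{N}$ of $EG$ (the spheres $S^{2N+1}\subset\mathbb{C}^{N+1}$ for $S^{1}$, and $S^{N}$ for $\mathbb{Z}_{2}$), form the parametric functionals $E_{N}(u,\lambda,e)=E(u,\lambda)$ on $\mathcal{H}\times\mathbb{R}\times EG_{N}$ with the diagonal $G$-action, and perturb $G$-equivariantly to achieve Morse--Bott transversality. The chain complex is generated by $G$-orbits of critical points of $E_{N}$, the differential counts isolated gradient trajectories modulo $G$, and $H^{G}_{*}(H)$ is defined as the direct limit over $N$. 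The Palais--Smale condition, the $L^{\infty}$-estimates, and the compactness arguments from Theorem 2.2 transfer to this equivariant setting since $G$ is compact and acts by isometries. A $G$-equivariant continuation argument then shows that $H^{G}_{*}(H)$ depends only on the $G$-equivariant homotopy class of $H$ within the class $(H1)$--$(H2)$.

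This invariance reduces the computation to a convenient model, e.g.\ $H_{0}(x,u)=\tfrac{c}{p+1}|u|^{p+1}$. Its critical points are $G$-orbits of nonlinear eigenspinors $(u,\lambda)$ solving $Du=\lambda c|u|^{p-1}u$ under the normalization, graded by the spectral flow of $D$ along a reference path, and indexed by a countable nonlinear spectrum. In the equivariant Morse--Bott reduction each orbit, with its free $G$-action, contributes a single $\mathbb{Z}_{2}$-generator. Although the non-equivariant differential cancels all these generators in pairs (producing the vanishing of Theorem 2.2), the equivariant differential is constrained by $G$-symmetry on gradient trajectories and leaves exactly one generator in each even degree for $G=S^{1}$ and in each degree for $G=\mathbb{Z}_{2}$, assembling to $H_{*}(BG;\mathbb{Z}_{2})$ and matching the stated answer.

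The main obstacle is exactly this last step: establishing $G$-equivariant Morse--Bott transversality in the strongly indefinite infinite-dimensional setting, correctly computing the spectral-flow grading of the nonlinear eigenvalues in the model, and verifying that the $E^{\infty}$-page of the resulting Borel-type spectral sequence equals $H_{*}(BG;\mathbb{Z}_{2})$ rather than collapsing to zero. A secondary subtlety is that the vanishing argument for $H_{*}(H)$ cannot be made $G$-equivariant (otherwise $H^{G}_{*}(H)$ would also vanish); the equivariant computation must therefore exhibit explicit $G$-families of trajectories whose equivariant count reflects the structure of the equivariant cohomology $H^{*}(BG;\mathbb{Z}_{2})=\mathbb{Z}_{2}[u]$ with $|u|=2$ for $G=S^{1}$ and $|u|=1$ for $G=\mathbb{Z}_{2}$.
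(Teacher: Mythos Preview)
Your proposal takes a genuinely different route from the paper, and in its current form it is a research plan rather than a proof.

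\textbf{What the paper does.} The paper does \emph{not} use the Borel construction. Its ``equivariant homology'' is defined by an elementary quotient: the chain groups are $C_{k}^{S^{1}}(H)=\bigl(Crit_{k}(E_{H})/S^{1}\bigr)\otimes\mathbb{Z}_{2}$, and the boundary is obtained by breaking each critical circle into a max/min pair $(z^{+},z^{-})$ and counting trajectories between the $z^{+}$'s (resp.\ summing over both representatives in the $\mathbb{Z}_{2}$ case). More importantly, the computation is carried out for the \emph{quadratic} model $H_{0}(s)=\tfrac{1}{2}|s|^{2}$, not for a nonlinear $|u|^{p+1}$. With $H_{0}$ the critical points are exactly the pairs $(\lambda_{k},\psi_{k})$ of eigenvalues and eigenspinors of $D$; the flow written in the eigenbasis decouples into ODEs $a_{i}'=(\lambda_{i}-\lambda(t))a_{i}$, so the connecting orbits can be described explicitly. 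One sees that the critical circles all have even relative index with a gap of~$2$, hence $\partial_{S^{1}}=0$ trivially and $H_{*}^{S^{1}}(H_{0})=\mathbb{Z}_{2}$ for $*$ even. The general case follows from the stability result.

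\textbf{Comparison and gap.} Your Borel approximation would, for a free action, ultimately recover the same groups (since $X\times_{G}EG\simeq X/G$), but it imports heavy machinery---parametric transversality, a spectral sequence, direct limits---that the paper avoids entirely. The decisive simplification you miss is the choice of model: by reducing to a nonlinear $|u|^{p+1}$ you face exactly the obstacles you list (unknown nonlinear spectrum, spectral-flow grading, transversality), whereas the paper's quadratic $H_{0}$ makes the critical set and the connecting trajectories completely explicit. As written, your proposal correctly predicts the answer $H_{*}(BG;\mathbb{Z}_{2})$ but does not carry out any of the steps needed to reach it; the paper's argument, by contrast, is a direct two-page computation once the quotient complex is set up.
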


We will see in section 8 that from those two theorems follows many existence and multiplicity results on problems of the form $Du=h(x,u)$. We also have Brezis-Nirenberg type results in the critical case. We can apply the same procedure to the conformal Laplacian to get similar results in that case.

\section{Relative index and Moduli space of trajectories}
Notice that in our problem if we consider the Hessian of $E$ at a critical point $(\lambda,u)$, we get $$ Hess(E)(u,\lambda)=\left(\begin{array}{cc}
|D|^{-1}Du-\lambda |D|^{-1}\frac{\partial h}{\partial s}(x,u)) & -|D|^{-1}h(x,u)\\
-|D|^{-1}h(x,u) & 0
\end{array}
\right)
$$
Hence the index and co-index of the critical point are infinite. So we need to introduce an alternative way of grading as in \cite{AB2}. 
First we consider the splitting of $\mathcal{H}$ as $$\mathcal{H}=\mathcal{H}^{+}\oplus\mathcal{H}^{-} $$ where $\mathcal{H}^{-}=\overline{span\{\varphi_{i},i<0 \}}$ and $\mathcal{H}^{+}=\overline{span\{\varphi_{i},i>0 \}}$. We write for every $u\in \mathcal{H},$ $u=u^{+}+u^{-}$ according to the previous splitting.
\begin{definition}
Consider two closed subspaces $V$ and $W$ of a Hilbert space $F$. We say that $V$ is a compact perturbation of $W$ if $P_{V}-P_{W}$ is a compact operator. 
\end{definition}
$P_{V}$ in the previous definition denote the orthogonal projection on $V$. If in the case where $V$ is a compact perturbation of $W$, we can define the relative dimension as $$dim(V,W)=dim(V\cap W^{\perp})-dim(V^{\perp}\cap W).$$
One can check that it is well defined and finite. Now if we have three subspaces $V$, $W$ and $H$ such that $V$ and $W$ are compact perturbations of $H$. Then $V$ is also a compact perturbation of $W$ and $$dim(V,W)=dim(V,H)+dim(H,W).$$
Using this concept of relative dimension we can define a relative index as our grading.  
\begin{lemma}
The relative index is well defined for critical points of $E$ as long as $H$ satisfies $(H1)$, under assumption that it is Morse. 
\end{lemma}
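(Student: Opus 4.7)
The plan is to realize the Hessian at every critical point as a compact perturbation of a fixed self-adjoint model operator whose negative eigenspace is exactly $\mathcal{H}^{-}\times\{0\}$, and then invoke the standard spectral-theoretic fact that the negative spectral projection is stable under compact perturbations provided $0$ lies outside the spectrum. View the Hessian as a bounded self-adjoint operator $A(u,\lambda)$ on $\mathcal{H}\times\mathbb{R}$, and introduce the reference $A_{0}=\mathrm{diag}(S,1)$ with $S:=|D|^{-1}D$. Under the positive-scalar-curvature hypothesis $D$ has no harmonic spinors, so $S$ is an involution with $\pm 1$-eigenspaces $\mathcal{H}^{\pm}$, and the negative eigenspace of $A_{0}$ is exactly $\mathcal{H}^{-}\times\{0\}$. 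The difference $K(u,\lambda):=A(u,\lambda)-A_{0}$ has four pieces: the $(1,1)$-perturbation $-\lambda\,|D|^{-1}M_{\partial_{s}h(x,u)}$, the two off-diagonal entries built from the vector $|D|^{-1}h(x,u)\in\mathcal{H}$, and the rank-one $(2,2)$-correction $-1$. The three rank-one contributions are trivially compact, so the main analytic task is compactness of $T:=|D|^{-1}M_{\partial_{s}h(x,u)}\colon\mathcal{H}\to\mathcal{H}$.

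For that step I would combine $(H1)$ with the Rellich--Kondrachov theorem. Integrating $(H1)$ gives the pointwise bound $|\partial_{s}h(x,u(x))|\le C(1+|u(x)|^{p-1})$, the bounded region $\{|s|\le r_{0}\}$ being handled by continuity of $h$ and compactness of $M$. The strict subcriticality $p<(n+1)/(n-1)$ makes $\mathcal{H}=H^{1/2}(\Sigma M)\hookrightarrow L^{p+1}(\Sigma M)$ compact. For any $v_{n}\rightharpoonup v$ in $\mathcal{H}$ one then has $v_{n}\to v$ strongly in $L^{p+1}$, and the H\"older estimate
\[
\Bigl|\int_{M}\partial_{s}h(x,u)\,(v_{n}-v)\,w\Bigr|\le C\bigl(1+\|u\|_{p+1}^{p-1}\bigr)\|v_{n}-v\|_{p+1}\|w\|_{p+1}
\]
(with exponents $(p+1)/(p-1),\,p+1,\,p+1$) shows $M_{\partial_{s}h(x,u)}(v_{n}-v)\to 0$ in $H^{-1/2}=\mathcal{H}^{*}$. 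Composing with the bounded isomorphism $|D|^{-1}\colon H^{-1/2}\to H^{1/2}$ gives $T(v_{n}-v)\to 0$ in $\mathcal{H}$, so $T$, and hence $K(u,\lambda)$, is compact.

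Finally I would invoke the standard lemma: if $A$ and $A_{0}$ are bounded self-adjoint with $A-A_{0}$ compact and $0\notin\sigma(A)\cup\sigma(A_{0})$, then $P^{-}(A)-P^{-}(A_{0})$ is compact. Indeed one extends $\chi_{(-\infty,0)}$, which is continuous on the compact set $\sigma(A)\cup\sigma(A_{0})\subset\mathbb{R}\setminus\{0\}$, to a continuous $f\colon\mathbb{R}\to\mathbb{R}$ and approximates $f$ uniformly by polynomials on the joint spectrum; continuous functional calculus then forces $f(A)-f(A_{0})$ to be compact. The Morse hypothesis guarantees $\ker A(u,\lambda)=0$, and $A_{0}$ is plainly invertible, so the lemma applies, and $V^{-}(u,\lambda):=\mathrm{Im}\,P^{-}(A(u,\lambda))$ is a compact perturbation of $\mathcal{H}^{-}\times\{0\}$. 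The relative dimension $\dim(V^{-}(u,\lambda),\mathcal{H}^{-}\times\{0\})$ is therefore a well-defined finite integer, which we take as the relative index. The principal obstacle is the compactness of $T$: it is precisely at the critical exponent $p=(n+1)/(n-1)$ that the embedding $\mathcal{H}\hookrightarrow L^{p+1}$ loses compactness, $T$ ceases to be compact, and the relative index fails to be a finite integer.
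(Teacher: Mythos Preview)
Your argument is correct and follows essentially the same strategy as the paper: write the Hessian as a fixed hyperbolic model plus a compact remainder, then conclude that the negative eigenspace is a compact perturbation of the reference. The paper's proof is a one-liner that just records the difference of operators and declares it compact; you supply the missing analysis (the Rellich--Kondrachov/H\"older argument for compactness of $|D|^{-1}M_{\partial_{s}h(x,u)}$ and the functional-calculus lemma passing from compactness of $A-A_{0}$ to compactness of $P^{-}(A)-P^{-}(A_{0})$), which is a genuine improvement in rigor.

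One harmless discrepancy worth flagging: the paper takes the reference subspace to be $\Gamma=\mathcal{H}^{-}\times\mathbb{R}$, whereas you take $\mathcal{H}^{-}\times\{0\}$ (the negative space of $A_{0}=\mathrm{diag}(S,1)$). Both choices give a well-defined finite relative dimension, but they differ by the one-dimensional $\mathbb{R}$-factor, so your index is the paper's index shifted by $+1$. This does not affect the lemma as stated, but if you want your grading to match the paper's later computations (Proposition~3.3 and Section~7) you should either switch to $\Gamma$ or use the model $A_{0}=\mathrm{diag}(S,-1)$, whose negative space is exactly $\mathcal{H}^{-}\times\mathbb{R}$ and which differs from the Hessian by an equally compact correction.
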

\begin{proof}
Let $\Gamma=\mathcal{H}^{-}\times \mathbb{R}$, and $(u,\lambda)$ a critical point of $E$. Notice now that the operator $f\mapsto Df-\lambda \frac{\partial h}{\partial s}(x,u)f$ has discrete spectrum since $D^{-1}$ is a compact operator. So the space $V^{-}(u,\lambda)$ which is the closure of the span of the eigenfunction of the Hessian of $E$ at the point $(u,\lambda)$, corresponding to negative eigenvalues, is well defined. Moreover, $V^{-}(u,\lambda)$ is a compact perturbation of $\Gamma$. This follows from the fact that $$\left( \begin{array}{cc}
|D|^{-1}D & |D|^{-1} \\
|D|^{-1} & 0
\end{array}
\right) - \left( \begin{array}{cc}
|D|^{-1}D-\lambda |D|^{-1}\frac{\partial h}{\partial s}(x,u) & |D|^{-1}h(x,u) \\
|D|^{-1}h(x,u) & 0
\end{array}
\right),$$
is a compact operator.
\end{proof}
We define now the moduli space of $\mathcal{H}$-gradient trajectories. For that consider the following differential system :
\begin{equation}\label{GF}
\left\{\begin{array}{ll}
\frac{\partial u}{\partial t}=u^{-}-u^{+}+\lambda |D|^{-1}h(x,u)\\
\frac{\partial \lambda}{\partial t}=\int_{M}H(x,u)-1
\end{array}
\right.
\end{equation}
This system is in fact the descending gradient flow of our  functional in $\mathcal{H}\times \mathbb{R}$ and since the right hand side is smooth, then we have local existence of the flow. Notice that one is tempted to use the $L^{2}$ gradient flow, to get a heat flow equation, but in this case the problem is ill-posed since the spectrum of $D$ is unbounded from below. This type of gradient flow was used by A. Bahri in \cite{B}, where it appears that it has better properties than the classical heat flow and it is moreover positivity preserving. Given two critical points $z_{0}=(u_{0},\lambda_{0})$ and $z_{1}=(u_{1},\lambda_{1})$ such that $E(z_{i})\in [a,b]$ for $i=0,1$. We define the space of connecting orbits from $z_{0}$ to $z_{1}$ by  $$\mathbb{M}^{a,b}(z_{0},z_{1})=\left\{z\in C^{1}(\mathbb{R},\mathcal{H}\times \mathbb{R}) |z \text{ satisfies \ref{GF} and } z(-\infty)=z_{0};z(+\infty)=z_{1}\right\}.$$
The moduli space of trajectories is then defined by $$\mathcal{M}^{a,b}(z_{0},z_{1})=\mathbb{M}^{a,b}(z_{0},z_{1})/\mathbb{R}.$$
\begin{proposition}
Assume that $i_{rel}(z_{0})> i_{rel}(z_{1})$, then if $E$ is Morse-Smale, $\mathcal{M}^{a,b}(z_{0},z_{1})$ is a finite dimensional manifold of dimension $i_{rel}(z_{0})-i_{rel}(z_{1})-1$.
\end{proposition}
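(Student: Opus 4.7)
The plan is to realize $\mathbb{M}^{a,b}(z_0,z_1)$ as the zero set of a nonlinear Fredholm section and then apply the implicit function theorem. Concretely, I would fix a smooth reference path $\bar z(t) = (\bar u(t),\bar\lambda(t))$ from $z_0$ to $z_1$ and introduce the Banach manifold
$$\mathcal{B}(z_0,z_1) = \bigl\{\, \bar z + \zeta \,:\, \zeta \in W^{1,2}(\mathbb{R},\mathcal{H}\times\mathbb{R})\,\bigr\},$$
together with the bundle whose fiber is $L^{2}(\mathbb{R},\mathcal{H}\times\mathbb{R})$. The gradient flow \ref{GF} defines a smooth section $\mathcal{F}(z) = \partial_t z - X(z)$, where $X$ denotes the right-hand side of \ref{GF}. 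Its zero set is exactly $\mathbb{M}^{a,b}(z_0,z_1)$.

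The core analytic step is to prove that the vertical differential $D\mathcal{F}(z)$ at a solution is a Fredholm operator whose index equals $i_{rel}(z_0) - i_{rel}(z_1)$. Along a solution $z$, the linearization has the form $\partial_t - A(t)$, where $A(t)$ converges as $t\to\pm\infty$ to the operators associated to the Hessians of $E$ at $z_0$ and $z_1$. By the Morse assumption these limiting operators are invertible, so the standard spectral-flow Fredholm theorem (as used in the Angenent--van der Vorst setting of \cite{An} and in the Rabinowitz--Floer framework of \cite{Urs1}, \cite{Urs2}) applies. The crucial point here is that, thanks to the splitting $\mathcal{H} = \mathcal{H}^+\oplus\mathcal{H}^-$ and the structure of $X$, the operator $A(t)$ differs from the fixed symmetric splitting operator $(u,\lambda)\mapsto (u^- - u^+,0)$ by a compact term (this is essentially the compactness argument already used in Lemma 3.2 when showing that $V^-(u,\lambda)$ is a compact perturbation of $\Gamma$). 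Consequently $D\mathcal{F}(z)$ is Fredholm and a spectral-flow computation identifies its index with the relative index difference.

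Once the Fredholm property is established, the Morse--Smale assumption is precisely the statement that $D\mathcal{F}(z)$ is surjective for every $z\in\mathbb{M}^{a,b}(z_0,z_1)$. The implicit function theorem then shows that $\mathbb{M}^{a,b}(z_0,z_1)$ is a smooth manifold of dimension $i_{rel}(z_0) - i_{rel}(z_1)$. Finally, the free $\mathbb{R}$-action by time translation (which is free because $z_0\ne z_1$ forces the trajectory to be non-constant, and the assumption $i_{rel}(z_0) > i_{rel}(z_1)$ rules out $z_0=z_1$) yields the quotient manifold $\mathcal{M}^{a,b}(z_0,z_1)$ of dimension $i_{rel}(z_0) - i_{rel}(z_1) - 1$.

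The main obstacle I expect is the verification that $D\mathcal{F}(z)$ is Fredholm in this strongly indefinite setting: the symbol $\partial_t$ is not elliptic, so one must really rely on the structural decomposition of $X$ into a hyperbolic linear part $(u^- - u^+, 0)$ plus a compact nonlinear correction coming from $\lambda|D|^{-1}h(x,u)$ and the constraint term, and then identify the index with the spectral flow of the asymptotic Hessians. Once this is in place, the surjectivity and quotient arguments are standard.
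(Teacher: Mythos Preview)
Your proposal is correct and follows essentially the same strategy as the paper: realize $\mathbb{M}^{a,b}(z_0,z_1)$ as the zero set of the section $z\mapsto \partial_t z+\nabla E(z)$, decompose the linearization $\partial_t+Hess(E(z))$ as a time-independent hyperbolic operator plus a compact perturbation to obtain the Fredholm property with index $i_{rel}(z_0)-i_{rel}(z_1)$, invoke Morse--Smale for surjectivity, and then quotient by the free $\mathbb{R}$-action. The only cosmetic differences are that the paper works in the $C^1/C^0$ setting and cites Abbondandolo--Majer \cite{AB1} for the Fredholm and index statements, whereas you set things up in $W^{1,2}/L^2$ and phrase the index computation via spectral flow; both routes lead to the same conclusion.
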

\begin{proof}
One can see first that $\mathbb{M}^{a,b}(z_{0},z_{1})=F^{-1}(0)$ where $F:C^{1}(\mathbb{R},\mathcal{H}\times \mathbb{R})\mapsto \mathcal{Q}^{0}=C^{0}(\mathbb{R},\mathcal{H}\times \mathbb{R})$ defined by $$F(z)=\frac{dz}{dt}+\nabla E(z),$$
We want to use the implicit function theorem to prove our result. For that we need to show that the linearised operator $F$ is Fredholm and onto.
The linearised operator corresponds to $\partial F(z)=\frac{d}{dt}+Hess(E(z))$, and this is a linear differential equation in the Banach space $\mathcal{H}\times \mathbb{R}$. Now we refer to \cite{AB1} for the details of the proof. To see that we first need to notice that $Hess(E(z))$ can be written as $$Hess(E(z))=\left( \begin{array}{cc}
|D|^{-1}D & 0\\
0 & 1
\end{array}
\right) +\left( \begin{array}{cc}
-\lambda |D|^{-1}\frac{\partial h}{\partial s}(x,u) & -|D|^{-1}h(x,u)\\
-|D|^{-1}h(x,u) & -1
\end{array}
\right). 
$$
The operator $\left( \begin{array}{cc}
|D|^{-1}D & 0\\
0 & 1
\end{array}
\right)$ is time independent and hyperbolic and the operator $\left( \begin{array}{cc}
-\lambda |D|^{-1}\frac{\partial h}{\partial s}(x,u) & -|D|^{-1}h(x,u)\\
-|D|^{-1}h(x,u) & -1
\end{array}
\right) $ is compact. Hence we have that $DF$ is a Fredholm operator with index $$ind(DF(z))=dim(V^{-}(F(z_{0}),V^{-}(F(z_{1}))$$
$$=dim(V^{-}(F(z_{0}),\Gamma)+dim(\Gamma,V^{-}(F(z_{1}))$$
$$=i_{rel}(z_{0})-i_{rel}(z_{1}).$$
Also from the same work \cite{AB1}, we have the fact that $DF(z)$ is onto if and only if the intersection is transverse.\\
To finish the proof now, it is enough to notice that the action of $\mathbb{R}$ is free and hence we can mod by that action to get the desired result.
\end{proof}
\section{Compactness}
\subsection{(PS) Condition}
We recall that a functional $F$ is said to satisfies the (PS) condition ((PS) for Palais-Smale), at the level $c$ if every sequence $(x_{k})$ such that $F(x_{k})\longrightarrow c$ and $Df(x_{k})\longrightarrow 0$, has a convergent subsequence. We will say that $F$ satisfies (PS) if the previous condition is satisfied for all $c\in \mathbb{R}$.
\begin{lemma}
If $H$ satisfies $(H2)$ then the functional $E$ satisfies (PS).
\end{lemma}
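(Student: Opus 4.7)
The plan is a two-stage argument. Given a Palais--Smale sequence $z_k = (u_k, \lambda_k) \in \mathcal H \times \mathbb R$ with $E(z_k) \to c$ and $\|\nabla E(z_k)\|_{\mathcal H \times \mathbb R} \to 0$, I first establish a priori bounds on $z_k$ and then upgrade weak to strong convergence using the compactness of the Dirac resolvent in the subcritical range.

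Unpacking the Palais--Smale hypothesis produces three scalar identities: the energy value $E(z_k) = c + o(1)$; the $\lambda$-derivative $-(\int_M H(x,u_k) - 1) = o(1)$, which immediately gives $\int_M H(x,u_k) \to 1$; and the $u$-derivative $\int_M \langle Du_k, v\rangle - \lambda_k \int_M \langle h(x,u_k), v\rangle = o(\|v\|_{\mathcal H})$ for every test $v \in \mathcal H$. My first move is to test against $v = u_k$ and eliminate $\int_M \langle Du_k, u_k\rangle$ and $\int_M H(x,u_k)$ using the energy identity, which yields the key relation
\[
\lambda_k \int_M \bigl(\langle h(x,u_k), u_k\rangle - H(x,u_k)\bigr) + \lambda_k \;=\; 2c + o(\|u_k\|_{\mathcal H}) + o(\lambda_k) + o(1).
\]
Integrating $(H2)$ bounds the integral on the left below by $c_1 \|u_k\|_{L^{p+1}}^{p+1} - c_2\,\mathrm{vol}(M)$, so, after a short case analysis on the sign of $\lambda_k$, this identity delivers uniform bounds on $\|u_k\|_{L^{p+1}}$ and $|\lambda_k|$. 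The degenerate possibility $\lambda_k \to 0$ is ruled out independently: in that case the gradient equation $Du_k - \lambda_k h(x,u_k) \to 0$ in $H^{-1/2}$ would force $D u_k \to 0$ there, hence $u_k \to 0$ in $\mathcal H$ because $D:\mathcal H \to H^{-1/2}$ is an isomorphism (positive scalar curvature together with the Lichnerowicz formula excludes harmonic spinors), contradicting $\int_M H(x,u_k) \to 1$. To convert the bounds on $\|u_k\|_{L^{p+1}}$ and $|\lambda_k|$ into a bound in $\mathcal H$, I then test $\nabla E(z_k)$ against $v = u_k^+ - u_k^-$; the identity $\int_M \langle Du_k, u_k^+ - u_k^-\rangle = \|u_k\|_{\mathcal H}^2$, together with the H\"older estimate $|\int_M \langle h(x,u_k), u_k^+ - u_k^-\rangle| \le \|h(x,u_k)\|_{L^{(p+1)/p}} \|u_k^+ - u_k^-\|_{L^{p+1}}$ and the growth control from $(H1)$, yields the needed bound on $\|u_k\|_{\mathcal H}$ after absorbing an $o(\|u_k\|_{\mathcal H})$ term.

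For the convergence stage, weak sequential compactness furnishes a subsequence with $u_k \rightharpoonup u$ in $\mathcal H$ and $\lambda_k \to \lambda$. Subcriticality $p < \frac{n+1}{n-1}$ makes $\mathcal H \hookrightarrow L^{p+1}$ compact, so $u_k \to u$ strongly in $L^{p+1}$; a standard Nemytskii argument using $(H1)$ gives $h(x,u_k) \to h(x,u)$ in $L^{(p+1)/p}$, which continuously embeds into $H^{-1/2}$. Passing to the limit in $Du_k = \lambda_k h(x,u_k) + o(1)$ in $H^{-1/2}$ and inverting $D$ (again an isomorphism $\mathcal H \to H^{-1/2}$) produces $u_k \to u$ strongly in $\mathcal H$, closing the argument.

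The hard part will be the boundedness stage: because $E$ is strongly indefinite, no direct coercivity estimate is available, and $(H2)$ only controls the combination $\langle h,u\rangle - H$ rather than either summand alone. Extracting uniform bounds on $(u_k,\lambda_k)$ forces one to juggle all three Palais--Smale identities against the constraint $\int_M H \to 1$, handle the sign ambiguity of $\lambda_k$ by cases, and separately rule out the degenerate limit $\lambda_k \to 0$ via the Dirac isomorphism; the other stages are then routine.
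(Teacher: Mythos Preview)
Your overall strategy coincides with the paper's: test the gradient against $u_k$ and combine with the energy and the constraint, then test against $u_k^+-u_k^-$, and finally use the compact subcritical embedding to pass from weak to strong convergence. The convergence stage is correct as you describe it.

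The boundedness stage, however, has a circularity. The identity you derive,
\[
\lambda_k\Bigl(\int_M\bigl(\langle h(x,u_k),u_k\rangle-H(x,u_k)\bigr)+1\Bigr)=2c+o(\|u_k\|_{\mathcal H})+o(\lambda_k)+o(1),
\]
carries $o(\|u_k\|_{\mathcal H})$ on the right, and at this point $\|u_k\|_{\mathcal H}$ is not yet controlled; no case analysis on the sign of $\lambda_k$ can absorb that term. Your separate exclusion of $\lambda_k\to 0$ has the same defect: concluding $\lambda_k h(x,u_k)\to 0$ in $H^{-1/2}$ already presupposes a bound on $\|h(x,u_k)\|_{L^{(p+1)/p}}$, hence on $\|u_k\|_{L^{p+1}}$. (For (PS) one does not need $\lambda_k$ bounded away from zero anyway.) The paper closes this loop by \emph{coupling} the two test identities rather than using them sequentially: from testing against $u_k$ one has $\lambda_k\int_M\langle h(u_k),u_k\rangle=C+o(|\lambda_k|)+o(\|u_k\|)$, and from testing against $u_k^{\pm}$ one has $\|u_k\|^2=\lambda_k\int_M\langle h(u_k),u_k\rangle+o(\|u_k\|)$; substituting the first into the second gives $\|u_k\|^2=C+o(|\lambda_k|)+o(\|u_k\|)$, while dividing the first by $\int\langle h(u_k),u_k\rangle$ (bounded below via $(H2)$) gives $|\lambda_k|\le C+o(\|u_k\|)+o(|\lambda_k|)$, and these two coupled relations force $\|u_k\|$ and $|\lambda_k|$ bounded simultaneously. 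Your argument can be repaired the same way: feed the H\"older estimate $\|u_k\|_{\mathcal H}\le C|\lambda_k|(1+\|u_k\|_{L^{p+1}}^p)+o(1)$ from your last step back into the first identity before trying to extract bounds.
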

\begin{proof}
Let $z_{k}=(u_{k},\lambda_{k})$ be a (PS) sequence for $E$, then the following holds
\begin{equation} 
Du_{k}-\lambda_{k}h(u_{k})=o(1), \label{grad}
\end{equation}
\begin{equation}
\int_{M}(H(u_{k})-1)=o(1),
\end{equation}
and
\begin{equation}\label{en}
\frac{1}{2}\int u_{k}Du_{k}-\lambda_{k}\int_{M}(H(u_{k})-1=c+o(1) 
\end{equation}
Composing (\ref{grad}) with $u_{k}$ and using (\ref{en}) we get $$\lambda_{k}(\int_{M}(<h(u_{k}),u_{k}>-2H(u_{k}))+2)=c+o(||u_{k}||)$$
Therefore $$\lambda_{k}\int_{M}<h(u_{k}),u_{k}>=C+o(|\lambda_{k}|)+o(||u_{k}||)$$
Therefore, from $(H2)$ we have that $$\int_{M}<h(x,u),u>\geq \int_{M}H(x,u)-c_{2}+c_{1}\int_{M}|u|^{p+1},$$
So either we have the boundedness of $u_{k}$ in $L^{p+1}$ or we can assume without loss of generality that $\int_{M}<h(x,u_{k}),u_{k}> >0$. Thus,
\begin{equation} \label{lam b}
\lambda_{k}=C+o(||u_{k}||)+o(|\lambda_{k}|).
\end{equation}

Now we compose (\ref{grad}) with $u_{k}^{+}$ to get 
\begin{equation}\label{u+}
||u_{k}^{+}||^{2}=\lambda_{k} \int_{M}<h(u_{k}),u_{k}^{+}>+o(||u_{k}^{+}||)
\end{equation}
In a similar way we get the inequality
\begin{equation}\label{u-}
||u_{k}^{-}||^{2}=\lambda_{k} \int_{M}<h(u_{k}),u_{k}^{-}>+o(||u_{k}^{-}||)
\end{equation}
 Now combining (\ref{u+}) and (\ref{u-}), we have $$||u_{k}||^{2}=\lambda_{k} \int_{M}<h(u_{k}),u_{k}>+o(||u_{k}||)$$
Now using (\ref{lam b}) we get $$||u_{k}||^{2}=C+o(|\lambda_{k} |)+o(||u_{k}||)$$

Hence $||u_{k}||$ is bounded in $H^{\frac{1}{2}}(\Sigma M)$ and $\lambda_{k}$ is bounded in $\mathbb{R}$. By compactness of the Sobolev Embedding for $p<\frac{n+1}{n-1}$ we have the strong convergence up to a subsequence of  $(u_{k})$ to a function $u$ in $L^{p+1}$ and weakly in $H^{\frac{1}{2}}$ also the convergence of $\lambda_{k}$ to $\lambda$. So going back to (\ref{grad}) and again multiplying by $u_{k}^{+}$ and $u_{k}^{-}$ we get the convergence in norm and hence the (PS) condition holds. 
\end{proof}
\subsection{Compactification of the Moduli spaces}
In order to define the homology we need that the boundary operator (that will be defined later) satisfies $\partial ^{2}=0$. The main step in showing this fact is to prove the compactness of the moduli space of trajectories between two critical points with index difference equal to $2$.
So let us consider $z(t)\in \mathbb{M}^{a,b}(z_{0},z_{1})$. By definition $z=(u,\lambda)$ satisfies \begin{equation} \label{eq1}
u'=u^{-}-u^{+}+\lambda |D|^{-1}h(x,u)
\end{equation}
and \begin{equation} \label{eq2}
\lambda ' =-\int_{M}H(x,u)-1
\end{equation}
First since $z=(u,\lambda)$ is a gradient flow, then $$\int_{-\infty}^{+\infty}||z'(t)||^{2}dt\leq b-a,$$
Now considering $$<z(t),z'(t)>+2E(z)=\lambda \int_{M} (<h(x,u),u>-H(x,u)+1)$$

hence $$\int_{T}^{T+1} |\lambda| |\int_{M} (<h(x,u),u>-H(x,u)+1)| \leq C \sup_{t}||z(t)||$$
using the assumption $(H2)$, we have 
\begin{equation}\label{est}
\int_{T}^{T+1}|\lambda |\int_{M}|u|^{p+1}dt\leq C(1+\sup_{t}||z(t)||)
\end{equation}
In the other hand, If we let $P_{+}$ and $P_{-}$ to be the projectors on $\mathcal{H}^{+}$ and $\mathcal{H}^{-}$ respectively, then the operator $$G(t)=e^{-t}P_{-}\chi_{t\geq 0}-e^{t}P_{+}\chi_{t\leq 0}$$ is a fundamental solution for the evolution operator $\frac{d}{dt}+P_{+}-P_{-}$. Hence one has the following estimate $$||G(t)||\leq Ce^{-|t|}.$$
In particular this leads to $$u(t)=\int_{-\infty}^{+\infty}G(t-s)\lambda(s)|D|^{-1}h(x,u)ds$$
from this we get that $$||u(t)||\leq C(1+\sup_{T}\int_{T}^{T+1}\lambda |||D|^{-1}h(x,u)||)$$
But using the Sobolev embedding and assumption $(H2)$ we get $$|||D|^{-1}h(x,u)||\leq |||D|^{\frac{-1}{2}}h(x,u)||_{L^{2}} \leq C(1+||u||_{L^{p+1}}^{p})$$
Now from (\ref{est}) we have $$||u(t)|| +|\lambda| \leq C(1+\sup_{t}||z(t)||^{\frac{p}{p+1}})$$
This gives us a uniform bound on $||z(t)||$.
This is enough to get convergence in the $C^{1}_{loc}(\mathbb{R},\mathcal{H})$ but in fact one can get more.
\begin{lemma}
Let $z$ be a solution of (\ref{eq1})-(\ref{eq2}) that is bounded uniformly in $\mathcal{H}\times \mathbb{R}$, then $u$ is bounded in $C^{0,\alpha}$ for some $0<\alpha<1$.
\end{lemma}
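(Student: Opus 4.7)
The strategy is a standard bootstrap regularity argument built on the mild (Duhamel) formulation of the flow that the authors have already derived. Since $|D|^{-1}$ is a smoothing operator of order $-1$ on the compact manifold $M$, the representation
$$u(t)=\int_{\mathbb{R}} G(t-s)\,\lambda(s)\,|D|^{-1}h(x,u(s))\,ds,$$
together with the estimate $\|G(t)\|\leq Ce^{-|t|}$, allows any integrability or regularity gain for $|D|^{-1}h(x,u)$ to be transferred back to $u$. The task therefore reduces to iteratively improving the regularity of the composition $h(x,u(\cdot))$.

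The iteration runs as follows. The standing hypothesis gives $u(t)\in \mathcal{H}$ uniformly, hence $u(t)\in L^{q_{0}}(\Sigma M)$ with $q_{0}=2n/(n-1)$ by Sobolev embedding. Integrating $(H1)$ yields the growth bound $|h(x,s)|\leq C(1+|s|^{p})$, so $h(x,u(t))\in L^{q_{0}/p}$ uniformly in $t$. Elliptic regularity of $|D|^{-1}$ places $|D|^{-1}h(x,u(t))$ in $W^{1,q_{0}/p}$, and Sobolev embedding then gives either an improved integrability $L^{q_{1}}$ (when $q_{0}/p<n$) or directly a $C^{0,\alpha}$ bound (when $q_{0}/p>n$). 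Feeding this back into the Duhamel representation, the exponential decay of $G$ and its mapping properties transfer the improved space to $u(t)$ itself. The strict subcriticality $p<(n+1)/(n-1)$ ensures that the recursion $q_{k+1}=nq_{k}/(np-q_{k})$ produces a strictly increasing sequence of exponents, so after finitely many steps one reaches $q_{k}>n$.

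At that stage the iteration terminates: $u(t)\in L^{\infty}(M)$ uniformly in $t$, hence $h(x,u(t))\in L^{\infty}(M)$ uniformly, and a final application of $|D|^{-1}$ (Schauder-type conclusion for the order $-1$ operator on a compact manifold) gives $|D|^{-1}h(x,u(t))\in C^{0,\alpha}(M)$ uniformly. Plugging this once more into the Duhamel formula yields the spatial Hölder bound $\|u(t)\|_{C^{0,\alpha}(M)}\leq C$. Hölder regularity in the time variable is then extracted from the integral representation as well: differences $u(t+\tau)-u(t)$ are controlled by the translation estimates for the kernel $G$ acting on the (now bounded) integrand, giving a uniform joint Hölder bound.

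The main obstacle is bookkeeping: one must choose function spaces that are simultaneously compatible with the spectral projections $P_{\pm}$ appearing in $G$ (which are naturally bounded on Hilbert scales $H^{s}$ but not on $L^{p}$ for $p\neq 2$) and with the nonlinearity $h$ (which acts most naturally on $L^{p}$-scales). Matching these at each step of the bootstrap requires interleaving Sobolev embeddings with the smoothing by $|D|^{-1}$, and must be done with strict inequalities so that the recursion genuinely gains at every step. A related subtlety is that the time-Hölder estimate is not obtained from the pointwise equation $u_{t}=u^{-}-u^{+}+\lambda|D|^{-1}h(x,u)$ alone, since the combination $u^{+}-u^{-}$ need not lie in $C^{0,\alpha}$ just because $u$ does; one really uses the integral (Duhamel) form, where the smoothing effect of $|D|^{-1}$ and the exponential decay of $G$ together produce the Hölder estimate in $t$.
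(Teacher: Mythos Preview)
Your approach is essentially the same as the paper's: both use the Duhamel representation $u(t)=\int G(t-s)\lambda(s)|D|^{-1}h(x,u(s))\,ds$ together with the smoothing effect of $|D|^{-1}$ to run a bootstrap, starting from $u\in\mathcal{H}\hookrightarrow L^{2n/(n-1)}$ and gaining integrability via $h(x,u)\in L^{q/p}$, $|D|^{-1}h(x,u)\in W^{1,q/p}$. The paper's proof is considerably terser---it records only the first step of the iteration and the remark that $P_{\pm}$ preserve regularity---while you spell out the recursion and the terminating Schauder step; one point worth noting is that your concern about the projections $P_{\pm}$ acting on $L^{p}$ is not a genuine obstacle here, since the spectral projections of a first-order self-adjoint elliptic operator on a closed manifold are pseudodifferential of order $0$ and hence bounded on $W^{s,p}$ and $C^{0,\alpha}$, which is exactly what the paper invokes in one line.
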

\begin{proof}
The idea here is to show that the solution operator has a regularisation effect and by a bootstrap argument we get the desired result.
So let us recall that $$u(t)=\int_{-\infty}^{+\infty}G(t-s)\lambda(s)|D|^{-1}h(x,u(s))ds,$$
now if $u\in \mathcal{H}$, then $h(x,u(s))\in L^{\frac{2n}{p(n-1)}}$ hence $|D|^{-1}h(x,u(s))\in W^{1,\frac{2n}{p(n-1)}}$. It is important also to notice that $P_{+}$ and $P_{-}$ preserves the regularity. 
Hence if $z$ is a solution then $||u||_{W^{\frac{2n}{p(n-1)}}}\leq C ||u||$.
\end{proof}

Notice that the moduli spaces are modelled on the affine space $\mathcal{Q}^{1}(z_{0},z_{1})=\tilde{z}+C^{1}(\mathbb{R},X)$, where $X=C^{0,\alpha}\times \mathbb{R}$. We consider the map $ev:\mathcal{M}(z_{0},z_{1})\longmapsto X$ defined by $ev(z)=z(0)$. This map is onto and hence the set $\mathcal{M}(z_{0},z_{1})$ is precompact.\\
\subsubsection{Compactification by broken trajectories}
Here we recall the operator $T_{i,i+1}:\mathcal{Q}^{1}(z_{i},z_{i+1})\longrightarrow \mathcal{Q}^{0}$ defined by $$T_{i}(z)=\frac{dz}{dt}+\nabla E(z)$$  is Fredholm assuming transversality and $\mathcal{M}(z_{i},z_{i+1})=T_{i,i+1}^{-1}(0)$. These operators then are surjective and hence they admit a right inverse $S_{i,i+1}$. 
Let $z_{01}\in\mathcal{M}(z_{0},z_{1})$ and $z_{12}\in\mathcal{M}(z_{1},z_{2})$. We define the function $$z_{02,T}(t)=(1-\varphi(\frac{t}{T}))z_{01}(t+2T)+\varphi(\frac{t}{T}z_{12}(t-2T)$$
for $\varphi$ a non-negative function such that $\varphi(t)=0$ if $t<-1$ and $\varphi(t)=1$ for $t\geq 1$.\\

We define now the operator $$A_{T}=R^{+}_{T}\tau_{2T}S_{01}\tau_{-2T}R^{+}_{T}+R^{-}_{T}\tau_{2T}S_{12}\tau_{-2T}R^{-}_{T}$$
where $\tau$ is the translation operator defined by $\tau_{a}f(t)=f(t+a)$ and $R^{\pm}$ is a pair of smooth functions satisfying $(R^{+}_{1})^{2}+(R^{-}_{1})^{2}=1$ and $R^{+}_{1}(t)=0$ for $t\leq -1$, $R^{+}_{1}(t)=R^{-}_{1}(-t)$ and $R^{\pm}_{T}=R^{\pm}(\frac{t}{T})$. 
Then we have that $dT_{02}(z_{02,T})\circ A_{T}$ converges to the identity operator as $T\longrightarrow \infty$. Hence by setting $z=z_{02,T}+A_{T}w$, finding a connecting orbit is equivalent to solving $T_{02}(z)=0$ which can be done using the implicit function theorem in the space $\mathcal{Q}^{0}$. This is of course possible because the operator $T_{02}$ is Fredholm. Notice that this construction can be done transversally to the kernel of the linearized operator by setting $z=z_{02,T}+u+A_{T}w$ where $u$ is an element in the kernel and $w$ is small.\\
Using this we have defined the gluing map by $$z_{01}\sharp_{T,v}z_{12}=z_{02,T}+u+A_{T}w.$$
Now we can deduce that in fact the set $\mathbb{M}^{a,b}(z_{0},z_{1})$ has compact closure.
\subsection{Construction of the Homology}

In this section we will define the different chain complexes and their homologies and we will give an explicit computation later, of the later mentioned homologies under specific assumptions.\\
Given a potential $H$ satisfying $(H1)$ and $(H2)$, we let $E_{H}$ denote its energy functional and $E_{H_{0}}$ for $H_{0}(s)=\frac{1}{2}|s|^{2}$. For $a<b$ we define the critical sets $Crit_{k}^[a,b](E_{H})$ as the set of critical points of $E_{H}$ with energy in the interval $[a,b]$ and relative index $k$.\\
Notice that if $E_{H}$ is Morse and satisfies (PS) (which we can always assume as we will see in section 6), then $Crit_{k}^{[a,b]}(E_{H})$ is always finite. Now we define the chain complex $C_{k}^{[a,b]}(H)$ as the vector space over $\mathbb{Z}_{2}$ generated by $Crit_{k}^{[a,b]}$, for every $k\in \mathbb{Z}$. That is $$C_{k}^{[a,b]}(H)=Crit_{k}^{[a,b]}(E_{H})\otimes \mathbb{Z}_{2},$$ 
And the boundary operator $\partial$ defined for $z \in Crit_{k}^{[a,b]}(E_{H})$ by 
$$\partial z =\sum_{y\in Crit_{k-1}^{[a,b]}(E_{H})} (\sharp  \mathcal{M}(z,y) mod [2])y$$
Using the compactness result of the previous subsection we do have $\partial^{2}=0$ and therefore it is indeed a chain complex and we will write $H_{*}^{[a,b]}(H)=H_{*}(C_{*}^{[a,b]}(H),\partial)$ and just $H_{*}^{[a,b]}$ for the case $H_{0}(s)=|s|^2$.\\
\subsubsection{The Equivariant case}
Here we will define the equivariant homology for two different group actions, the first one that is needed in our problem is the $S^{1}$ action and the second one is the $\mathbb{Z}_{2}$ action, this last one will be useful when we will investigate the classical Yamabe problem.\\
In the case where $H$ is $S^{1}$-invariant, that is $H(x,e^{i\theta}s)=H(x,s)$, we define the chain complex $$C_{k}^{[a,b],S^{1}}(H)=\frac{Crit_{k}^{[a,b]}(E_{H})}{S^{1}}\otimes \mathbb{Z}_{2}.$$
Notice that this definition makes sense since the critical points here are in fact critical circles since $E_{H}$ is equivariant. Notice now that by breaking the symmetry, that each $z_{k}\in \frac{Crit_{k}^{[a,b]}(E_{H})}{S^{1}}$ splits to a max and a min $z_{k}^{+}\in Crit_{k+1}^{[a,b]}(\tilde{E}_{H})$ and $z_{k}^{+}\in Crit_{k}^{[a,b]}(\tilde{E}_{H})$ where $\tilde{E}_{H}$ is the perturbed functional. We define for $z_{k+1}\in \frac{Crit_{k}^{[a,b]}(E_{H})}{S^{1}}$, $$\partial_{S^{1}} z_{k+1}=\sum_{z_{k}\in \frac{Crit_{k}^{[a,b]}(E_{H})}{S^{1}}} (<z_{k+1}^{+},z_{k}^{+}>z_{k}$$
where here $<x,y>=\sharp ( \mathcal{M}(x,y))mod[2]$. We see here that $\partial_{S^{1}}$ is well defined and to show that indeed it is a boundary operator we need to prove that
\begin{lemma}
$\partial_{S^{1}}^{2}=0$
\end{lemma}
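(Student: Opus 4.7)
The plan is to deduce $\partial_{S^1}^2 = 0$ from the fact that the small $S^1$-symmetry breaking perturbation $\tilde{E}_H$ produces an honest Morse-Floer complex whose differential $\tilde{\partial}$ satisfies $\tilde{\partial}^2 = 0$; this latter fact follows from the compactness and gluing arguments of Sections 3--4 applied to $\tilde{E}_H$, since $\tilde{E}_H$ still satisfies assumptions analogous to $(H1)$--$(H2)$. The equivariant differential $\partial_{S^1}$ will be identified with a block of $\tilde{\partial}$, and the desired identity will follow from projecting $\tilde{\partial}^2 = 0$ onto this block after showing that the remaining contributions vanish mod $2$.

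First I would split the perturbed chain complex as $\tilde{C}_* = \tilde{C}_*^+ \oplus \tilde{C}_*^-$ according to the max/min decomposition $z \mapsto (z^+, z^-)$ induced by the perturbation, and write $\tilde{\partial}$ as a $2 \times 2$ matrix $(D_{ab})_{a,b \in \{+,-\}}$ of block operators. Under the identification of each critical circle $z$ with its max representative $z^+$ (with the appropriate degree shift), the equivariant differential $\partial_{S^1}$ is precisely $D_{++}$. Projecting $\tilde{\partial}^2 = 0$ onto the $(+,+)$-block gives
$$D_{++}^2 \equiv D_{-+} \circ D_{+-} \pmod{2},$$
so the proof reduces to showing that the right-hand side vanishes mod $2$.

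To evaluate $D_{+-} z^+$ on a generator, I distinguish two types of flow lines from $z^+$ (Morse index $k+1$) to mins $v^-$ (index $k$). The internal contribution within the same critical circle ($v=z$) comes from exactly the two short arcs of $S^1$ joining the perturbation's max to its min, contributing $2z^- \equiv 0 \pmod{2}$. For external contributions across distinct circles of equal $S^1$-index, I would compose with $D_{-+}$ and argue by a Morse-Bott cascade argument: in the limit of vanishing perturbation, broken trajectories $z^+ \to v^- \to y^+$ assemble into cascades through the $1$-dimensional intermediate critical circle $V$, on which the residual $S^1$-action is free, so the resulting configurations come in $S^1$-families and contribute an even count mod $2$.

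The hard part will be making this last cancellation rigorous. Concretely, one needs a Morse-Bott gluing theorem showing that perturbed broken trajectories near such a cascade are parametrized by an open subset of $S^1 \times (0,\epsilon)$, so that $S^1$-orbits of broken trajectories produce even counts. An alternative and possibly more direct route is to compactify the $1$-dimensional moduli space $\mathcal{M}(z^+, y^+)$ in the perturbed system by broken trajectories (using the gluing construction of Subsection 4.2.1) and to check by hand that the boundary components passing through a min $v^-$ with $v \neq z$ come in pairs that are interchanged by the approximate $S^1$-symmetry of $\tilde{E}_H$; combined with the internal pair cancellation, this gives the result.
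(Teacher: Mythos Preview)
Your overall strategy is correct and close in spirit to the paper's, but the paper organizes the argument differently and handles the key cancellation step much more simply.

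The paper does not work with the full block matrix of $\tilde\partial$; instead it introduces an auxiliary ``diagonal'' complex $\overline{C}_k=\bigoplus_{z_k}(z_k^+,z_k^-)\otimes\mathbb{Z}_2$ with
\[
\overline{\partial}(z_{k+1}^+,z_{k+1}^-)=\sum_{z_k}\bigl(\langle z_{k+1}^+,z_k^+\rangle\,z_k^+,\ \langle z_{k+1}^-,z_k^-\rangle\,z_k^-\bigr),
\]
and shows directly that $\overline{\partial}^{\,2}=0$ by examining $\partial\mathcal{M}(z_{k+1}^+,z_{k-1}^+)$. The broken trajectories through intermediate maxes $z_k^+$ are exactly the $\overline{\partial}^{\,2}$-terms, so one must show that the remaining boundary components, namely those through intermediate mins $\overline{z}_{k+1}^-$ at the \emph{same} $S^1$-index, cancel. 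Finally one uses the $S^1$-symmetry identity $\langle z_{k+1}^+,z_k^+\rangle=\langle z_{k+1}^-,z_k^-\rangle$ to exhibit an isomorphism $f$ with $\partial_{S^1}=f^{-1}\overline{\partial} f$.

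The real divergence is in how the ``through-a-min'' terms are killed. You propose a Morse--Bott cascade/gluing argument to show that $D_{-+}\circ D_{+-}$ vanishes mod $2$. The paper instead argues that these contributions are \emph{identically zero}: the internal arcs give $\sharp\mathcal{M}(z_{k+1}^+,z_{k+1}^-)\equiv 0$, and for a \emph{distinct} circle $\overline{z}_{k+1}$ at the same $S^1$-index one has $\sharp\mathcal{M}(z_{k+1}^+,\overline{z}_{k+1}^-)=0$, because otherwise the $S^1$-action would produce a trajectory from $z_{k+1}^+$ to $\overline{z}_{k+1}^+$, i.e.\ between two nondegenerate critical points of equal Morse index, contradicting transversality. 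This short $S^1$/transversality contradiction replaces your entire cascade analysis and avoids any Morse--Bott gluing theorem. Your route would work in principle and is closer to how $S^1$-equivariant Floer homology is often set up via cascades, but here it is considerably heavier than necessary.
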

\begin{proof}
First we define the following chain complex $$\overline{C}_{k}=\bigoplus_{z_{k}\in \frac{Crit_{k}^{[a,b]}(E_{H})}{S^{1}}} (z_{k}^{+},z_{k}^{-})\otimes \mathbb{Z}_{2},$$
with the boundary operator $$\overline{\partial} (z_{k+1}^{+},z_{k+1}^{-}) =\sum_{z_{k}\in \frac{Crit_{k}^{[a,b]}(E_{H})}{S^{1}}} (<z_{k+1}^{+},z_{k}^{+}>z_{k}^{+},<z_{k+1}^{-},z_{k}^{-}>z_{k}^{-}).$$
We claim that $\overline{\partial}^{2}=0$. Indeed this follows from the computation of $\partial \mathcal{M}(z_{k+1}^{+},z_{k-1}^{+})$. This later boundary contains two kind of terms, the ones of the form $\mathcal{M}(z_{k+1}^{+},\overline{z}_{k+1}^{-})$, $\mathcal{M}(\overline{z}_{k+1}^{-},z_{k-1}^{+})$ and those of the form $\mathcal{M}(z_{k+1}^{+},z_{k}^{+})$, $\mathcal{M}(z_{k}^{+},z_{k-1}^{+})$. The second terms are the ones that appear in the formula for $\overline{\partial}^{2}$. So it is enough to show that the terms of the first kind cancel.\\
To show this we notice that first $\sharp  \mathcal{M}(z_{k+1}^{+},z_{k+1}^{-}) mod [2] =0$ and if $\sharp  \mathcal{M}(z_{k+1}^{+},\overline{z}_{k+1}^{-}) \not =0$, then by the $S^{1}$ action we have that $\sharp  \mathcal{M}(z_{k+1}^{+},\overline{z}_{k+1}^{+}) \not =0$ which is impossible by transversality, hence  $\sharp  \mathcal{M}(z_{k+1}^{+},\overline{z}_{k+1}^{-}) =0$ and this finishes the proof of the claim that $(\overline{C}_{*},\overline{\partial})$ is a chain complex.\\
In fact this cancellation caused by the $S^{1}$ action is exactly like the $\Delta$ operator in the loop space introduced by Denis and Sullivan in \cite{CS}. This operator satisfies $\Delta^{2}=0$ and same holds in our case.

\begin{figure}[H]
\centering
\begin{picture}(0,0)%
\includegraphics{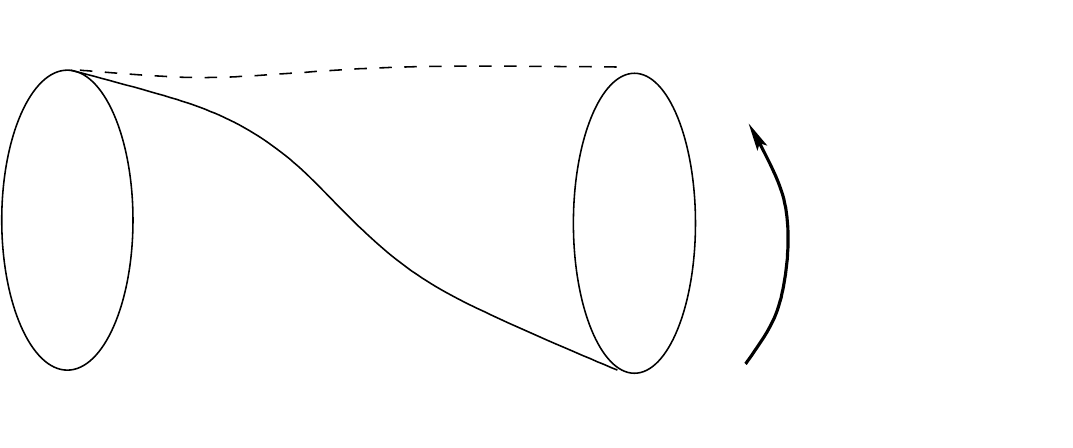}%
\end{picture}%
%
%
\setlength{\unitlength}{3947sp}%
\begingroup\makeatletter\ifx\SetFigFont\undefined%
\gdef\SetFigFont#1#2#3#4#5{%
  \reset@font\fontsize{#1}{#2pt}%
  \fontfamily{#3}\fontseries{#4}\fontshape{#5}%
  \selectfont}%
\fi\endgroup%
\begin{picture}(5137,2137)(1508,-2115)
\put(1756,-2041){\makebox(0,0)[lb]{\smash{{\SetFigFont{12}{14.4}{\rmdefault}{\mddefault}{\updefault}{\color[rgb]{0,0,0}$z_{k+1}^{-}$}%
}}}}
\put(1771,-166){\makebox(0,0)[lb]{\smash{{\SetFigFont{12}{14.4}{\rmdefault}{\mddefault}{\updefault}{\color[rgb]{0,0,0}$z_{k+1}^{+}$}%
}}}}
\put(4441,-2041){\makebox(0,0)[lb]{\smash{{\SetFigFont{12}{14.4}{\rmdefault}{\mddefault}{\updefault}{\color[rgb]{0,0,0}$\overline{z}_{k+1}^{\text{ -}}$}%
}}}}
\put(4516,-151){\makebox(0,0)[lb]{\smash{{\SetFigFont{12}{14.4}{\rmdefault}{\mddefault}{\updefault}{\color[rgb]{0,0,0}$\overline{z}_{k+1}^{+}$}%
}}}}
\put(5461,-1216){\makebox(0,0)[lb]{\smash{{\SetFigFont{12}{14.4}{\rmdefault}{\mddefault}{\updefault}{\color[rgb]{0,0,0}$S^{1}$-action}%
}}}}
\end{picture}%
\caption{Cancelation from the $S^{1}$ action}
\end{figure}

We consider now the map $f_{*}$ defined on the chain complex $(\overline{C}_{*},\overline{\partial})$ by  $f_{*}((z_{k}^{+},z_{k}^{-})=z_{k}$. $f$ is well defined and it is an isomorphism between $C_{*}^{[a,b],S^{1}}(H)$ and $(\overline{C}_{*},\overline{\partial})$. And by the $S^{1}$ action we have that $<z_{k+1}^{+},z_{k}^{+}>=<z_{k+1}^{-},z_{k}^{-}>$ we have that $\partial_{S^{1}}=f^{-1}\circ \overline{\partial} \circ f$. Which completes the proof of the lemma.
\end{proof}
So now we define the equivariant homology by $H_{*}^{[a,b],S^{1}}(H)=H_{*}(C_{*}^{[a,b],S^{1}}(H),\partial_{S^{1}})$.\\
The $\mathbb{Z}_{2}$-equivariant homology is easier to define, indeed we assume that $H$ is even, then the functional $E_{H}$ is invariant under the the obvious $\mathbb{Z}_{2}$-action. The critical points in this case come in pairs $z_{k}=(\lambda_{k},u_{k})$ and $\overline{z}_{k}=(\lambda_{k},-u_{k})$. We consider then the chain complex $$C_{k}^{[a,b],\mathbb{Z}_{2}}(H)=\frac{Crit_{k}^{[a,b]}(E_{H})}{\mathbb{Z}_{2}}\otimes \mathbb{Z}_{2},$$
along with the boundary operator $\partial_{\mathbb{Z}_{2}}$ defined for $z_{k+1}\in \frac{Crit_{k+1}^{[a,b]}(E_{H})}{\mathbb{Z}_{2}}$ by $$\partial_{\mathbb{Z}_{2}}z_{k+1}=\sum_{z_{k}\in \frac{Crit_{k+1}^{[a,b]}(E_{H})}{\mathbb{Z}_{2}}} (<z_{k+1},z_{k}>+<z_{k+1},\overline{z}_{k}>)z_{k}.$$
Notice now that the quotient map $f:Crit_{*}^{[a,b]}(E_{H})\longrightarrow\frac{Crit_{*}^{[a,b]}(E_{H})}{\mathbb{Z}_{2}}$ extends to a map from $C_{*}^{[a,b]}(H)$ onto $C_{*}^{[a,b],\mathbb{Z}_{2}}(H)$. Moreover, it is easy to show that the following diagram commutes:\\
\begin{center}

\begin{tikzpicture}
\matrix(m)[matrix of math nodes,
row sep=2.6em, column sep=2.8em,
text height=1.5ex, text depth=0.25ex]
{C_{k+1}^{[a,b]}(H)&C_{k}^{[a,b]}(H)\\
C_{k+1}^{[a,b],\mathbb{Z}_{2}}(H)&C_{k}^{[a,b],\mathbb{Z}_{2}}(H)\\};
\path[->,font=\scriptsize,>=angle 90]
(m-1-1) edge node[auto] {$\partial$} (m-1-2)
edge node[auto] {$f_{k+1}$} (m-2-1)
(m-1-2) edge node[auto] {$f_{k}$} (m-2-2)
(m-2-1) edge node[auto] {$\partial_{\mathbb{Z}_{2}}$} (m-2-2);
\end{tikzpicture}
\end{center}
Hence we have indeed a chain complex and we will write the corresponding homology as $H_{*}^{[a,b],\mathbb{Z}_{2}}(H)=H_{*}(C_{*}^{[a,b],\mathbb{Z}_{2}}(H),\partial_{\mathbb{Z}_{2}})$.

\section{Stability}
In this section we will consider two "Hamiltonians" $H_{1}$ and $H_{2}$ and we will show that that under suitable conditions, $H^{*}(H_{1})=H_{*}(H_{2})$. The proof will be done in the general case and there is absolutely no difference in the equivariant case since all the perturbations can be taken equivariant. Let $\eta(s)$ be a smooth function on $\mathbb{R}$ such that $\eta(s)=1$ for $s\geq 1$ and $\eta(s)=0$ if $s\leq 0$. We set $H_{s}=(1-\eta(s))H_{1}+\eta(s)H_{2}$.\\
Now we define the non-autonomous gradient flow by $$z'(t)=-\nabla E_{H_{t}}(z(t)),$$
Where $\nabla E_{H_{t}}$ is the gradient with respect to $z$ for a fixed $t$. Given $z_{1}$ a critical point of $E_{H_{1}}$ and $z_{2}$ a critical point of $E_{H_{2}}$, we let $z(t)$ the flow line from $z_{1}$ to $z_{2}$.
\begin{lemma}
There exists $\delta>0$ such that if $|H_{1}-H_{2}|\leq \delta$ then $z(t)$ is uniformly bounded by a constant depending only on $z_{1}$ and $z_{2}$.
\end{lemma}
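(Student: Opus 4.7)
The plan is to adapt the a priori estimates from the autonomous compactness argument in Section 4.2 to the present non-autonomous flow, exploiting that $\partial_s H_s = \eta'(s)(H_2-H_1)$ is supported in $s\in[0,1]$ and bounded pointwise by $\|\eta'\|_\infty\,\delta$, so that its contribution will be absorbed once $\delta$ is taken small.

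First, I would differentiate the energy along the flow to get
$$\frac{d}{dt}E_{H_t}(z(t)) \;=\; -\|z'(t)\|^2 \;-\; \lambda(t)\,\eta'(t)\int_M (H_2-H_1)(x,u(t))\,dx.$$
Integrating over $\mathbb{R}$ and using $z(\pm\infty)=z_{1},z_{2}$ bounds $\int_\mathbb{R}\|z'(t)\|^2\,dt$ by $E_{H_1}(z_1)-E_{H_2}(z_2)$ plus an error of order $\delta\cdot\sup_{[0,1]}|\lambda|$. The same computation, applied up to a finite time $T$, shows that $E_{H_t}(z(t))$ stays controlled by the two critical values plus the same $\delta|\lambda|$-term.

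Second, I would rerun the bootstrap from Section 4.2 with the non-autonomous correction treated as a controlled perturbation. The identity $\langle z(t),z'(t)\rangle + 2E_{H_t}(z(t)) = \lambda(t)\int_M(\langle h_t(x,u),u\rangle-H_t(x,u)+1)$, combined with the super-quadratic growth in (H2) applied uniformly to the convex combination $H_t$, yields the analogue of estimate~(\ref{est}),
$$\int_T^{T+1}|\lambda(t)|\int_M|u(t)|^{p+1}\,dx\,dt \;\leq\; C\bigl(1+\sup_t\|z(t)\|\bigr),$$
with a constant depending only on the uniform (H1)--(H2) data of the family $H_s$ and on $E_{H_1}(z_1),\,E_{H_2}(z_2)$; the $\eta'(H_2-H_1)$ correction is harmless because it is compactly supported in $t$ and pointwise $O(\delta)$. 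The Duhamel representation
$$u(t)=\int_{-\infty}^{+\infty} G(t-s)\,\lambda(s)\,|D|^{-1}h_s(x,u(s))\,ds,$$
with $G$ the fundamental solution of $\tfrac{d}{dt}+P_+-P_-$, together with $\|G(t)\|\leq Ce^{-|t|}$ and the subcritical bound $\||D|^{-1}h_s(x,u)\|\leq C(1+\|u\|_{L^{p+1}}^{p})$, then yields
$$\|u(t)\|+|\lambda(t)| \;\leq\; C\bigl(1+\sup_t\|z(t)\|^{p/(p+1)}\bigr),$$
from which $\sup_t\|z(t)\|\leq C(z_1,z_2)$ follows because $p/(p+1)<1$.

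The main obstacle is checking that the bootstrap of Section 4.2 is genuinely local in time and hence survives the non-autonomous perturbation on $[0,1]$: one must verify that the extra terms produced by $\eta'(H_2-H_1)$ and by the $t$-dependence of $h_t$ contribute only $\delta$-small errors uniformly in $t$, so they can be absorbed into the constant $C$ once $\delta$ is small enough. The role of $\delta$ is precisely to close this absorption argument; once that is done, the final inequality has no free parameter growing with the trajectory, and the uniform bound on $\|z(t)\|$ depends only on $z_1$, $z_2$, and the fixed data of $H_1,H_2$.
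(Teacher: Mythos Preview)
Your approach is exactly the paper's: differentiate $E_{H_t}$ along the flow, track the extra $\lambda\,\eta'\!\int(H_1-H_2)$ term, combine with the identity $\langle z,z'\rangle+2E_{H_t}=\lambda\int(\langle h_t,u\rangle-H_t+1)$, and feed the result into the Duhamel representation from Section~4.2.

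The one place where your sketch is imprecise is the intermediate estimate. The $\eta'(H_2-H_1)$ correction is not ``pointwise $O(\delta)$'' in the sense you need: it contributes $\delta\int_0^1|\lambda|\le \delta\sup_t\|z(t)\|$ both to the running energy bound and to $\int\|z'\|^2$, so it is $O(\delta\sup\|z\|)$, not $O(\delta)$ with a constant depending only on $z_1,z_2$. Propagating this through the Cauchy--Schwarz step $\int_T^{T+1}|\langle z,z'\rangle|\le \sup\|z\|\cdot(\int\|z'\|^2)^{1/2}$, the paper obtains
\[
\int_T^{T+1}|\lambda|\int_M\bigl(\langle h_t,u\rangle-H_t+1\bigr)\;\le\;C\sup_t\bigl(\|z(t)\|+\delta\,\|z(t)\|^{3/2}\bigr),
\]
and hence the final bootstrap inequality is
\[
\|z(t)\|\;\le\;C\Bigl(1+\sup_t\bigl(\|z(t)\|+\delta\,\|z(t)\|^{3/2}\bigr)^{p/(p+1)}\Bigr),
\]
rather than your $\|z\|\le C(1+\sup\|z\|^{p/(p+1)})$. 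Your displayed inequality would close without any condition on $\delta$, which is inconsistent with your own closing remark that $\delta$ must be small to absorb the error. The paper's form is where the smallness of $\delta$ actually enters: one needs $\tfrac{3p}{2(p+1)}\le 1$ (automatic for $n\ge 3$) or $\delta$ small enough, and the paper notes that the smallness hypothesis is only genuinely required when $n=2$.
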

\begin{proof}
The proof here is not very different from the one done in subsection 4.2. Indeed here one needs to worry about the boundedness of $\lambda$ along the flow.\\
First notice that $$\frac{\partial E_{H_{t}}(z(t))}{\partial t}=-||z'(t)||^{2}+\lambda \eta'(t) \int_{M}H_{1}-H_{2}.$$
Therefore, we have $$E_{H_{t}}(z(t))\leq E_{H_{1}}(z_{1})+\delta \int_{0}^{t} \lambda \eta'(s)$$
and $$\int_{-\infty}^{+\infty}||z'(t)||^{2}dt\leq E_{H_{2}}(z_{2})-E_{H_{1}}(z_{1})+\delta\int_{0}^{1}\lambda(t)dt.$$
Using again the identity
$$<z(t),z'(t)>+2E_{H_{t}}(z)=\lambda \int_{M} (<h_{t}(x,u),u>-H_{t}(x,u)+1),$$
We find that 

$$\int_{T}^{T+1}|\lambda| \int_{M} (<h_{t}(x,u),u>-H_{t}(x,u)+1)dt\leq \sup_{t}||z(t)||(C_{1}+\delta ||\lambda||_{\infty})+\delta ||\lambda||_{\infty}+C_{2},$$
This can be written as 
$$\int_{T}^{T+1}|\lambda| \int_{M} (<h_{t}(x,u),u>-H_{t}(x,u)+1)dt\leq C\sup_{t}(||z(t)||+\delta ||z(t)||^{\frac{3}{2}}),$$
where $C$ is a constant depending on $z_{1}$ and $z_{2}$.\\
The rest of the argument from subsection 4.2 can be carried out to reach the following estimate :

$$||z(t)||\leq C(1+\sup_{t}(||z(t)||+\delta||z(t)||^{\frac{3}{2}})^{\frac{p}{p+1}}),$$
Thus, since $\frac{3p}{2(p+1)}\leq 1$, we have for $\delta<\delta_{0}(z_{1},z_{2})$, that $||z(t)||$ is uniformly bounded. 
\end{proof}
Notice that in the previous proof the need of $\delta$ to be small is only required in the case $n=2$.\\

Now, this uniform boundedness implies pre-compactness as in the autonomous case, hence we define now the moduli space of trajectories of the non-autonomous gradient flow, $\mathbb{M}(z_{1},z_{2})$, is pre-compact and we omit here the similar gluing construction that can be done to compactify. In fact we can show by following the same proof that $\mathbb{M}(z_{1},z_{2})$ is a finite dimensional manifold with dimension $i_{rel}(z_{1})-i_{rel}(z_{2})$.\\
And if $i_{rel}(z_{1})-i_{rel}(z_{2})=1$, we have that $$\partial \mathbb{M}(z_{1},z_{2})=\bigcup_{x\in Crit_{i_{rel}(z_{2})}(E_{H_{1}})} \mathbb{M}(z_{1},x)\times \mathbb{M}(x,z_{2})\bigcup_{y\in Crit_{i_{rel}(z_{1})}(E_{H_{2}})}\mathbb{M}(z_{1},y)\times \mathbb{M}(y,z_{2}).$$
With this in mind we can construct the continuation Isomorphism $$\Phi_{12} : C_{*}(H_{1})\longrightarrow C_{*}(H_{2}),$$ defined in the chain level by 
$$\Phi_{12}(z)=\sum_{x\in Crit_{i_{rel}(z)}(E_{H_{2}})}(\sharp \mathbb{M}(z,x) mod [2])x$$

Now by the previous remark on the boundary of the moduli space in the non-autonomous case, one sees that $\partial_{1} \Phi_{12}+\Phi_{12} \partial_{2}=0$, this shows that it is a chain homomorphism, hence it descends to the homology level.\\
The last thing to check is that it is an isomorphism. And that is by taking a homotopy of homotopies and doing the same work again (as in Schwarz \cite{Sh}).\\
Therefore we have that that $H_{*}(E_{H_{1}})=H_{*}(E_{H_{2}})$.

\begin{proposition}
Assume that $H_{1}$ and $H_{2}$ satisfies the assumptions $(H1)$ and $(H2)$. Then $H_{*}(H_{1})=H_{*}(H_{2})$. 
\end{proposition}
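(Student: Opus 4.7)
The plan is to upgrade the continuation construction sketched just above Proposition 5.2, which only works for $C^2$-close Hamiltonians, into a chain isomorphism between arbitrary $H_1,H_2$ satisfying $(H1)$ and $(H2)$. The overall strategy is classical: build a path of admissible Hamiltonians, chop it into small enough pieces that Lemma 5.1 applies on each piece, compose the resulting continuation maps, and verify independence of all choices.

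First I would verify that $(H1)$ and $(H2)$ are preserved under the linear homotopy $H_s = (1-s)H_1 + sH_2$, $s\in [0,1]$. Since $h_s = (1-s)h_1 + sh_2$ and $\partial h_s/\partial u$ is the corresponding convex combination, assumption $(H1)$ holds uniformly in $s$ with exponent $p=\max(p_1,p_2)<\frac{n+1}{n-1}$ and a constant $C=\max(C_1,C_2)$. Similarly $\langle h_s,u\rangle - H_s$ is the convex combination of $\langle h_i,u\rangle - H_i$, so $(H2)$ holds uniformly with constants $c_1 = \min(c_{1,1},c_{1,2})$ and $c_2 = \max(c_{2,1},c_{2,2})$. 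Consequently, the PS condition, the Fredholm property, and the compactness estimates of Section 4 hold uniformly along the entire path $H_s$.

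Next, I would fix an energy window $[a,b]$ and note that because each $E_{H_s}$ is Morse and satisfies (PS), the total set of critical points with energy in $[a,b]$ along the path is controlled and, in particular, the threshold $\delta_0 = \delta_0(z_1,z_2)$ from Lemma 5.1 can be chosen uniform in $s\in[0,1]$ over the (finitely many up to $s$-variation) critical pairs contributing to the chain complex. Choose a partition $0=s_0<s_1<\cdots<s_N=1$ with $\|H_{s_{i+1}}-H_{s_i}\|_{C^2}<\delta_0$. On each sub-interval, Lemma 5.1 gives uniform $\mathcal H \times \mathbb R$ bounds on non-autonomous trajectories, which by the argument used in Section 4 upgrades to $C^{0,\alpha}$-precompactness and the gluing construction from Section 4.2.1 goes through verbatim. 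This yields continuation chain maps
\[
\Phi_{i,i+1} : C_*^{[a,b]}(H_{s_i}) \longrightarrow C_*^{[a,b]}(H_{s_{i+1}}),\qquad \Phi_{i,i+1}(z) = \sum_{x}\bigl(\sharp\,\mathbb M(z,x)\!\!\mod 2\bigr)\,x,
\]
and the boundary analysis $\partial \Phi_{i,i+1} + \Phi_{i,i+1}\partial = 0$ of the one-dimensional moduli spaces shows each $\Phi_{i,i+1}$ descends to a homomorphism on homology. Composing, I obtain $\Phi_{12} := \Phi_{N-1,N}\circ\cdots\circ \Phi_{0,1} : H_*^{[a,b]}(H_1) \to H_*^{[a,b]}(H_2)$.

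Finally, to show $\Phi_{12}$ is an isomorphism I would run the homotopy-of-homotopies argument from Schwarz \cite{Sh} on each small segment: the two-parameter family of gradient equations parametrised by $(s,t)$ gives a chain homotopy between $\Phi_{i+1,i}\circ \Phi_{i,i+1}$ and the identity, so each $\Phi_{i,i+1}$ is an isomorphism on homology, and hence so is the composite $\Phi_{12}$. An independent-of-partition argument (subdividing further and invoking homotopy of homotopies again) shows the resulting map depends only on the endpoints, so we may finally pass to the direct limit as $a\to -\infty$, $b\to +\infty$ to obtain the desired isomorphism $H_*(H_1)\cong H_*(H_2)$. The main obstacle is bookkeeping the uniformity of the constants in $(H1)$–$(H2)$, of the threshold $\delta_0$, and of the a priori bounds of Section 4 along the entire homotopy so that the gluing and transversality arguments go through; once uniformity is in hand, the equivariant version is identical because the linear homotopy and all perturbations may be chosen $S^1$- or $\mathbb Z_2$-invariant.
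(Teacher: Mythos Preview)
Your proposal is correct and follows essentially the same route as the paper: take a homotopy $H_s$ between $H_1$ and $H_2$, partition it into steps small enough that Lemma~5.1 applies, and compose the resulting continuation isomorphisms. The paper's own proof is a terse three-line version of exactly this argument; your write-up simply supplies the details (verification that $(H1)$--$(H2)$ persist along the linear homotopy, uniformity of $\delta_0$, the homotopy-of-homotopies step, and the passage to the direct limit over energy windows) that the paper leaves implicit.
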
 
\begin{proof}
Without loss of generality one can assume the existence of a homotopy $H_{s}$ for $s\in [0,1]$ and a partition $0<s_{1}<\cdots <s_{k}<1$, such that $|H_{s_{i+1}}-H_{s_{i}}|\leq \delta$ hence there exists a continuation isomorphism $\Phi_{i,i+1}$ between $H_{*}(H_{s_{i}})$ and $H_{*}(H_{s_{i+1}})$ and since there exist finitely many of them, one gets an isomorphism between $H_{*}(H_{1})$ and $H_{*}(H_{2})$.
\end{proof}
The same stability results hold for the equivariant case.
\section{Transversality}
In this section we will show that up to a small smooth perturbation of the Hamiltonian we can assume that $E_{H}$ is Morse. Then, It can be approximated by a Morse-Smale functional with the same critical points and the same connections.

\begin{lemma}
Consider a Hamiltonian $H$ satisfying $(H1)$ and $(H2)$ then for a generic perturbation $K$ of $H$ in $C^{3}(\Gamma(\Sigma M))$, the energy functional $E_{H+K}$ is Morse.
\end{lemma}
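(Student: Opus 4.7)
The plan is a standard Sard--Smale transversality argument. First I would set up a separable Banach space $\mathcal{K}$ of $C^{3}$ perturbations of $H$, constructed in Floer's weighted manner and dense in $C^{3}(M \times \Sigma M)$, with the property that for $K$ small in $\mathcal{K}$ the sum $H+K$ still satisfies $(H1)$ and $(H2)$. It is important that $\mathcal{K}$ contain both fibrewise-linear perturbations $\phi(x)\,\mathrm{Re}\langle a(x),s\rangle$ and small fibrewise-quadratic perturbations $\phi(x)|s|^{2}/2$: the former contribute nothing to $\langle h,s\rangle - H$ and so preserve $(H2)$ exactly, while the latter only slightly perturb the constants $c_{1},c_{2}$.

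I then form the universal gradient map
\[
\Phi : \mathcal{H} \times \mathbb{R} \times \mathcal{K} \longrightarrow \mathcal{H} \times \mathbb{R},\qquad \Phi(u,\lambda,K) = \nabla E_{H+K}(u,\lambda),
\]
whose derivative in $(v,\mu)$ is the Hessian $\mathrm{Hess}(E_{H+K})$ computed in Section~3, self-adjoint and Fredholm of index $0$, and whose derivative in a perturbation $\tilde K$ is
\[
D_{K}\Phi(\tilde K) = \Bigl(-\lambda\,\tilde k(x,u),\ -\textstyle\int_{M}\tilde K(x,u)\Bigr).
\]
Assuming $D\Phi$ is surjective at every zero, $\Phi^{-1}(0)$ is a Banach submanifold and the projection $\pi:\Phi^{-1}(0) \to \mathcal{K}$ is a smooth Fredholm map of index $0$. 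Sard--Smale then produces a residual set of regular values $K \in \mathcal{K}$; at any such $K$, every critical point of $E_{H+K}$ has invertible Hessian, so $E_{H+K}$ is Morse.

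The crux is therefore the surjectivity of $D\Phi$. Suppose $(w,\tau)$ is orthogonal to the image. Orthogonality to the image of $D_{u,\lambda}\Phi$ places $(w,\tau) \in \ker \mathrm{Hess}(E_{H+K})$, and orthogonality to the $\tilde K$-direction gives, for every admissible $\tilde K$,
\[
\lambda \int_{M} \langle \tilde k(x,u),w\rangle + \tau \int_{M} \tilde K(x,u) = 0.
\]
At any critical point $\lambda \neq 0$: otherwise $Du=0$, forcing $u=0$ since $M$ has positive scalar curvature, which contradicts the constraint $\int H(x,0)=1$. Testing with linear perturbations $\tilde K = \phi(x)\,\mathrm{Re}\langle a(x),s\rangle$ and varying $\phi,a$ in dense families yields pointwise $\lambda w + \tau u = 0$, so $w = -(\tau/\lambda)\,u$. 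Testing next with quadratic perturbations $\tilde K = \phi(x)|s|^{2}/2$ and substituting for $w$ reduces the displayed identity to $(\tau/2)\int_{M} \phi|u|^{2} = 0$ for all $\phi$, hence $\tau |u|^{2} = 0$ a.e. Unique continuation for the Dirac-type equation $Du = \lambda(h+k)(x,u)$ prevents $u$ from vanishing on any set of positive measure, so $\tau = 0$ and consequently $w = 0$, proving surjectivity.

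The main obstacle is precisely this transversality step: one must set up $\mathcal{K}$ so that it is simultaneously a separable Banach space on which Sard--Smale applies, rich enough to contain both test families used above, and compatible with $(H1),(H2)$. Once this is in place, the remainder is standard Floer-theoretic machinery as in Schwarz \cite{Sh}, and the generic Morse condition follows immediately.
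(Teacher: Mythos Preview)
Your argument is correct and follows the same overall Sard--Smale transversality scheme as the paper: form the universal gradient map on $\mathcal{H}\times\mathbb{R}\times(\text{perturbations})$, observe that the $(u,\lambda)$-derivative is Fredholm of index $0$, prove surjectivity of the full linearisation, and invoke the transversality/Sard--Smale theorem.

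The difference lies only in how surjectivity is established. The paper argues \emph{directly}: it observes that constant perturbations $G\equiv c$ already span the $\mathbb{R}$-component of the target, while fibrewise-linear perturbations $G(x,s)=\langle f(x),s\rangle$ make $G_{z}=f$ range over all sections, so $-\lambda|D|^{-1}G_{z}$ is dense in $\mathcal{H}$; combined with the closed finite-codimensional range of the Hessian this gives surjectivity in two lines. You instead argue by \emph{duality}, pairing a hypothetical annihilator $(w,\tau)$ against linear and quadratic test perturbations, then invoking $\lambda\neq 0$ and unique continuation to force $(w,\tau)=0$. Your route is longer but more explicit: you actually justify $\lambda\neq 0$ at critical points (the paper silently uses this when it writes $-\lambda|D|^{-1}G_{z}$), and you pay attention to building a genuine separable Banach space $\mathcal{K}$ compatible with $(H1)$--$(H2)$, which the paper simply takes to be $C^{3}(\Gamma(\Sigma M))$ without further comment. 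The paper's choice of constants for the $\mathbb{R}$-component is slightly cleaner than your quadratic test, since it decouples the two components immediately and avoids any appeal to unique continuation.
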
 
\begin{proof}
We consider the functional $\psi :\mathcal{H}\times \mathbb{R}\times C^{3}(\Gamma(\Sigma M))  \longrightarrow \mathcal{H}\times \mathbb{R}$ defined by $$\psi(z,K)=\nabla E_{H+K}(z).$$
Notice first the inverse image of zero corresponds to critical points of the functional with Hamiltonian
$H+K$. Also, for $(z,K)\in \psi^{-1}(0)$, $$\partial_{z} \psi(z,h)v= Hess(E_{H+K}(z))v,$$
which is a perturbation of a compact operator, and hence it is a zero index Fredholm operator.
Now it remains to show that $\nabla \psi (z,K)$ is surjective. So let us compute the differential with respect to $K$ : $$\partial_{K}\psi(z,K)G=\left(\begin{array}{cc}
-\lambda |D|^{-1}G_{z}(x,u(x))\\
-\int_{M}G(x,u(x))dx
\end{array}
\right)
$$
So first by taking $G$ to be constant, we see that we can span the $\mathbb{R}$ component. For the other component we see that by taking $G(x,s(x))=<f(x),s(x)>$ then we have that the range of the first component is dense since $G_{z}$ can be any section of the spinor bundle and the operator $|D|^{-1}$ maps $C^{3}$ to a dense subspace. Thus we have the surjectivity. Therefore by the transversality theorem, $0$ is a regular point of $\psi(.,K)$ for a generic $K$ and this is equivalent to saying that $E_{H+K}$ is Morse.
\end{proof}
Notice also that the perturbation $K$ can be chosen to be even if $H$ is even, by changing the space of sections to even sections.
\begin{lemma}
Assume that $E$ is Morse and satisfies (PS) in $[a,b]$, then for every $\varepsilon>0$ there exists a functional $E^{\varepsilon}$ such that :\\
i)$||E-E^{\varepsilon}||_{C^{2}}<\varepsilon$\\
ii) $E^{\varepsilon}$ satisfies PS in $[a-\varepsilon,b+\varepsilon]$\\
ii)$E^{\varepsilon}$ has the same critical points as $E$ with the same connections (number of connecting orbits).
\end{lemma}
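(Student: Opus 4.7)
The plan is to construct $E^{\varepsilon} = E + K$ for a $C^{2}$-small $K$ that is supported away from the finite critical set and chosen generically, so that the perturbed flow becomes Morse--Smale while the critical points, their Hessians, and the index-one moduli spaces are all preserved. The strategy combines the Sard--Smale transversality argument of Lemma 6.1 with the non-autonomous stability estimates of Section 5.

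First I would isolate the critical set: since $E$ is Morse and satisfies (PS) on $[a,b]$, the set $\mathrm{Crit}^{[a,b]}(E) = \{z_{1},\ldots,z_{N}\}$ is finite and (PS) extends to $[a-\eta,b+\eta]$ for some $\eta > 0$ with the same critical set. Pick disjoint open neighborhoods $V_{i} \ni z_{i}$, let $\mathcal{B}$ be the Banach space of $C^{3}$-functionals supported in $(\mathcal{H}\times\mathbb{R}) \setminus \bigcup_{i} V_{i}$, and apply a Sard--Smale argument as in Lemma 6.1, but now to the linearisation of the flow equation along trajectories between critical points, to produce a generic $K \in \mathcal{B}$ with $\|K\|_{C^{2}} < \varepsilon$ for which the flow of $-\nabla(E+K)$ is Morse--Smale. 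Set $E^{\varepsilon} = E + K$. Property (i) is then immediate; for (ii) the proof of Lemma 4.1 runs through unchanged for $E^{\varepsilon}$ because $K$ is $C^{2}$-bounded with fixed support, so its contribution is a compact correction absorbed into the $o$-terms of the (PS) estimate, giving (PS) on $[a-\varepsilon,b+\varepsilon]$ for $\varepsilon < \eta$. The equality of critical sets in (iii) is clear since $K \equiv 0$ near each $z_{i}$, while outside $\bigcup_{i} V_{i}$ the lower bound on $\|\nabla E\|$ coming from (PS) on the sublevel forbids new critical points from appearing for $\|K\|_{C^{2}}$ sufficiently small.

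The main obstacle is preserving the connection count in (iii). For $z_{0},z_{1}$ with $i_{rel}(z_{0}) - i_{rel}(z_{1}) = 1$, the moduli space $\mathcal{M}_{E}(z_{0},z_{1})$ is finite, and each rigid trajectory persists under a $C^{2}$-small perturbation via the implicit function theorem applied to the Fredholm operator $dF$ of Proposition 3.3, whose surjectivity at a rigid trajectory follows from the Morse--Smale property; this yields $\sharp \mathcal{M}_{E^{\varepsilon}}(z_{0},z_{1}) \geq \sharp \mathcal{M}_{E}(z_{0},z_{1})$. The reverse inequality is the hardest step: I would run the non-autonomous homotopy $E_{s} = E + sK$ together with the uniform bound of Lemma 5.1 to show that any trajectory of $E^{\varepsilon}$ lies $C^{1}_{loc}$-close to a (possibly broken) trajectory of $E$, and then invoke the gluing analysis of Subsection 4.2.1 to rule out breaking when the index difference equals one and $\varepsilon$ is small. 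Obtaining this genuine bijection $\mathcal{M}_{E^{\varepsilon}}(z_{0},z_{1}) \cong \mathcal{M}_{E}(z_{0},z_{1})$, rather than merely the chain-level continuation isomorphism of Section 5, is the delicate part, and it crucially uses both the smallness of $\varepsilon$ and the fact that $K$ vanishes near the critical set.
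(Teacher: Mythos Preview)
The paper does not give a proof of this lemma; it states only that the argument is very similar to the one in \cite{AB2} and omits it. Your overall scheme---perturb by a $C^{2}$-small functional supported away from the (finite) critical set and run a Sard--Smale argument to force Morse--Smale---is precisely the standard route in \cite{AB2}, and your handling of (i), (ii), and the ``same critical points'' part of (iii) is sound.

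There is, however, a circularity in your treatment of the ``same connections'' clause. You assert that $\mathcal{M}_{E}(z_{0},z_{1})$ is finite and that each of its rigid trajectories persists via the implicit function theorem, because ``surjectivity at a rigid trajectory follows from the Morse--Smale property''. But the hypothesis is only that $E$ is Morse, not Morse--Smale; indeed, if $E$ were already Morse--Smale there would be nothing to perturb. Without transversality for $E$ itself, $\mathcal{M}_{E}(z_{0},z_{1})$ need not consist of finitely many nondegenerate points, $dF$ need not be onto along trajectories of $E$, and neither your persistence step nor the inequality $\sharp\mathcal{M}_{E^{\varepsilon}}\geq\sharp\mathcal{M}_{E}$ makes sense as written. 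The phrase ``same connections'' in the lemma should be read as the assertion that the $\bmod\,2$ counts are \emph{independent of which small Morse--Smale perturbation $E^{\varepsilon}$ one chooses}, so that the boundary operator is unambiguously defined; this is what \cite{AB2} actually establishes, via a continuation (cobordism) argument between two such perturbations, not by comparing either of them to the unperturbed $E$.
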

The proof of this result is very similar to the one in \cite{AB2} for that it will be omitted. 

\section{Computation of the Different Homologies}
Given $R>0$ we set $\rho_{R}$ to be a smooth function such that $\rho_{R}(s)=1$ for $s\in[0,R]$ and $\rho_{R}(s)=0$ for $s>R+1$.
\subsection{The regular case}

First let us compute this homology for the linear case, that is $H_{*}^{[-K,K]}$. In this case we know that the critical points are the couples of eigenvalues, eigenfunctions. And for each eigenvalue $\lambda_{k}$ there is a circle of eigenfunctions $e^{i\theta}\psi_{k}$. By a symmetry breaking argument we can break the circle to a min and a max as we did with in the construction of the homology in section 4. Therefore each critical circle of index $i_{0}$ will be broken to two critical points of index $i_{0}$ and $i_{0}+1$. Also notice that the critical circles have even index hence we have one generator for each index in the chain complex, that is $$C_{k}^{[-K,K]}=\mathbb{Z}_{2},$$ for all $k\in \mathbb{Z}$ such that $\lambda_{k}\in[-K,K]$. \\
Now let us compute the $\partial$ operator. Notice first that by construction the $\partial =0$ from odd index to even index. So it remains to compute it from even index to odd index, that is from min to max.\\ 
Let $(\lambda(t),u(t))$ be a flow line such that $$(\lambda(+\infty),u(+\infty))=(\lambda_{k},u_{k})$$ and $$(\lambda(-\infty),u(-\infty))=(\lambda_{k+1},u_{k+1}),$$
so if we write $u(t)$ in the basis $\varphi_{i}$ we get, if $u(t)=\sum_{i\in \mathbb{Z}}a_{i}(t)\varphi_{i}$, 
$$a_{i}'=(\lambda_{i}-\lambda(t))a_{i}.$$
Therefore we can write $a_{i}(t)=a_{i}(0)\exp(\int_{0}^{t}(\lambda_{i}-\lambda(s))ds)$. Using the convergence of $\lambda(t)$ at infinity we get that $a_{i}=0$ for $i\not \in \{k,k+1\}$ and we have (transversally to the $S^{1}$ action) exactly one flow line from the generator to the generator of the next index. Therefore the flow is in fact a finite dimensional one. As in the finite dimensional case we get then that $\partial $ is an isomorphism from even to odd.
Therefore, one gets $H_{*}=0$.\\

\begin{figure}[H]
\centering
\includegraphics[scale=.4]{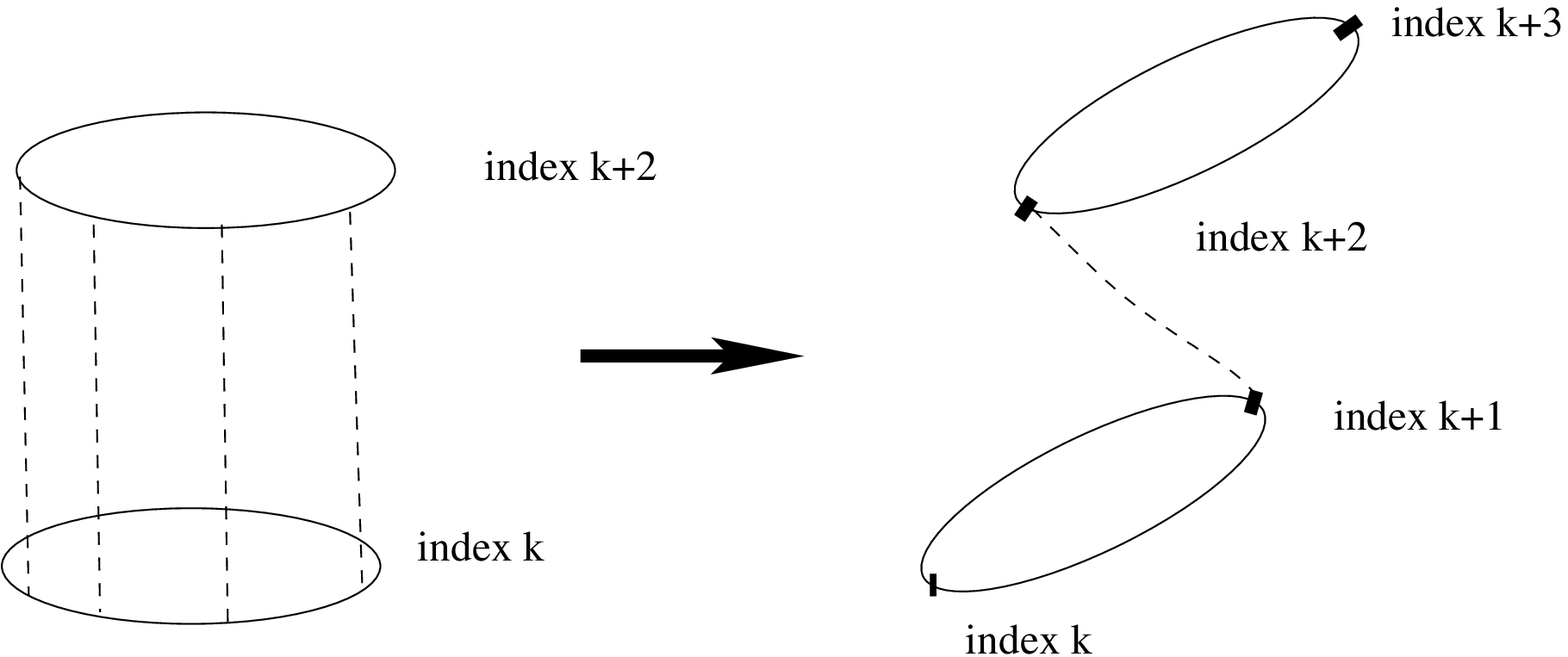}
\caption{Flow lines after Breaking the $S^{1}$ action}
\end{figure}

Using the stability result now of Section 5, we have that for every $H$ satisfying $(H1)$ and $(H2)$ we have that $H_{*}(H)=H_{*}(H_{0})=0$.
\subsection{The equivariant Case}

Let us consider again the linear case. The chain complex in this case is generated by a $\mathbb{Z}_{2}$ for each even index and the boundary operator is zero since we have a gap of 2 in the indices. Thus we have $H_{*}^{S^{1}}(H_{0})=\mathbb{Z}_{2}$ if $*$ is even and $H_{*}^{S^{1}}(H_{0})=0$ otherwise.\\
To conclude for the general case, notice that the deformations that we construct in the previous case preserves the $S^{1}$ action if we start with an $S^{1}$-equivariant $H$. Therefore $H_{*}^{S^{1}}(H)=\mathbb{Z}_{2}$.\\

A Similar computation in the case of the $\mathbb{Z}_{2}$ yields that $H_{*}^{\mathbb{Z}_{2}}(H)=\mathbb{Z}_{2}$.
\section{Applications}

\subsection{The sub-critical case}
If we consider the problem 
\begin{equation}
Du=\lambda h(x,u) \label{prob 1}
\end{equation} 
where $h$ has a potential $H$ satisfying $(H1)$ and $(H2)$ then
\begin{theorem}
The problem (\ref{prob 1}) has at least one solution. 
\end{theorem}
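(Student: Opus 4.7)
The plan is to produce a non-trivial critical point of $E_H(u,\lambda)=\frac{1}{2}\int_M\langle Du,u\rangle-\lambda\int_M(H(x,u)-1)$, which immediately gives a solution of (\ref{prob 1}): the Euler–Lagrange equations read $Du=\lambda h(x,u)$ together with $\int_M H(x,u)=1$, and hypothesis $(H2)$ forces $u\neq 0$, so the critical point yields a non-trivial $(u,\lambda)$ solving $Du=\lambda h(x,u)$.

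To produce such a critical point I would use the continuation technology of Section 5 with the linear endpoint $H_0(s)=\tfrac12|s|^{2}$ as starting data. The critical set of $E_{H_0}$ is non-empty and completely explicit: it consists of the $L^2$-rescaled eigenpairs $(\psi_k,\lambda_k)$ of the Dirac operator, a point already used in the computation of Section 7.1. Fix one such critical point $z_{0}=(\psi_k,\lambda_k)$ and consider the homotopy $H_s=(1-\eta(s))H_{0}+\eta(s)H$ from $H_0$ to $H$. By the proof of Proposition 5.2, after subdividing $[0,1]$ into finitely many sub-intervals I may assume each jump $|H_{s_{i+1}}-H_{s_i}|$ is smaller than the constant $\delta$ furnished by Lemma 5.1.

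The key step is to run the non-autonomous descending gradient flow $z'(t)=-\nabla E_{H_t}(z(t))$ forward in time with the asymptotic condition $z(-\infty)=z_{0}$. A standard stable-manifold construction at the hyperbolic critical point $z_{0}$ of $E_{H_0}$ provides the trajectory for $t\ll 0$, and it extends as long as it stays bounded. Lemma 5.1 supplies precisely this uniform bound in $\mathcal{H}\times\mathbb{R}$; note that its proof only uses $(H1)$–$(H2)$ and does \emph{not} require the existence of a limiting critical point at $t=+\infty$. For large positive $t$ the homotopy is constant, $H_t\equiv H$, so the trajectory is in fact the autonomous descending flow of $E_H$ starting from a bounded initial datum, along which the energy is monotonically non-increasing. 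The Palais–Smale condition (Lemma 4.1), applied at the limiting energy level, then extracts a subsequence $z(t_n)$ converging to a critical point of $E_H$, which is the desired solution. Iterating the argument across the finite partition of the homotopy handles the case where a single step of the homotopy does not satisfy the smallness assumption of Lemma 5.1.

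The main obstacle is the global existence and uniform boundedness of the non-autonomous trajectory; everything downstream is standard once these are in hand. Existence near $t=-\infty$ reduces to local stable-manifold theory at the hyperbolic critical point $z_{0}$, which is available because the Morse assumption of Section 6 makes $\mathrm{Hess}\,E_{H_0}(z_{0})$ non-degenerate transversally to the critical circle. The uniform boundedness is exactly the content of Lemma 5.1, whose argument I would simply reuse: the ingredients (the identity $\langle z,z'\rangle+2E_{H_t}(z)=\lambda\int_M(\langle h_t,u\rangle-H_t+1)$, the exponential estimate for the evolution operator $\tfrac{d}{dt}+P_{+}-P_{-}$, and the Sobolev control coming from $(H1)$) all remain available here because neither endpoint of the trajectory is assumed in advance. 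With both ingredients secured the (PS)-extraction produces the critical point of $E_H$ and completes the proof.
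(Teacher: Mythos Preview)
Your argument has a genuine gap where you invoke Lemma~5.1. You assert that its proof ``does not require the existence of a limiting critical point at $t=+\infty$'', but this is not so. Look at the proof in Section~5: both the bound on $\int_{\mathbb{R}}\|z'(t)\|^{2}\,dt$ and the two-sided control of $E_{H_{t}}(z(t))$ along the trajectory use the value $E_{H_{2}}(z_{2})$ at the forward endpoint. These constants are exactly what produce the $C_{1},C_{2}$ in the estimate on $\int_{T}^{T+1}|\lambda|\int_{M}(\langle h_{t},u\rangle-H_{t}+1)$, and hence the final sublinear bound $\|z(t)\|\le C(1+\sup_{t}\|z(t)\|^{p/(p+1)})$. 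Without the forward endpoint the energy $E_{H_{t}}(z(t))$ has no a priori lower bound---the functional is strongly indefinite, so no coercivity is available---and the argument does not close. The representation $u(t)=\int_{\mathbb{R}}G(t-s)\,\lambda(s)\,|D|^{-1}h(x,u(s))\,ds$ from Section~4.2 has the same defect: it singles out the unique \emph{bounded} solution on all of $\mathbb{R}$; on a forward half-line one picks up a homogeneous term $e^{t}P_{-}u(0)$, which is precisely the mechanism by which the descending flow of a strongly indefinite functional can escape to infinity from a non-critical initial point.

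The paper avoids this circularity by never following a single orbit into the unknown. It fixes an energy window $[a,b]$ isolating one critical circle of $E_{H_{0}}$, so that $H_{*}^{[a,b]}(H_{0})\neq 0$, and then uses the continuation map of Section~5. That map, and the chain homotopy proving $\Phi_{21}\circ\Phi_{12}\simeq\mathrm{id}$, are built from moduli spaces of non-autonomous trajectories running between \emph{two prescribed} critical points, so Lemma~5.1 applies verbatim with both endpoints available. If $E_{H}$ had no critical point in the enlarged window $[a-M,b+M]$ the target complex would vanish and the injection $H_{*}^{[a,b]}(H_{0})\hookrightarrow H_{*}^{[a-M,b+M]}(H)$ would be impossible. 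The two-sided control you need is therefore supplied by the homological machinery, not by an estimate on a single orbit.
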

\begin{proof}
We consider $a<b$ so that the set $[a<E_{H_{0}}<b]$ contains only one critical point, that we will take for example to be the first eigenvalue-eigenfunction, $(\lambda_{1},u_{1})$. Now we have that $H_{0}^[a,b]\not = 0$. But looking back at the proof of the stability result by the uniform bounds that we find along the non-autonomous flow, one can show that there exists $M>0$ such that $H_{0}^{[a,b]}\hookrightarrow H_{0}^{[a-M,b+M]}(H)$. Hence we have at least one critical point with energy between $a-M$ and $b+M$.
\end{proof}

\begin{theorem}
If $H$ is $S^{1}$-equivariant or even, then problem (\ref{prob 1}) has infinitely many solutions $z_{k}=(\lambda_{k},u_{k})$, with $E_{H}(z_{k})$ diverges to infinity as $k$ tends to infinity.
\end{theorem}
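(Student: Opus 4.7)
Proof plan. The argument uses the nontriviality in infinitely many degrees of the equivariant Rabinowitz--Floer homology computed in Section 7 (Theorem 2.3), combined with the finiteness of the chain complex on bounded energy windows.

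First, by Lemma 6.1 together with the remark that the Morse perturbation can be chosen equivariant (resp. even), we may assume $E_H$ is Morse while preserving the symmetry. The perturbed $E_H$ still satisfies $(H1)$ and $(H2)$, so $(PS)$ holds by Lemma 4.1, and critical points in any bounded energy window $[a,b]$ form a discrete (hence finite) set.

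Second, I argue by contradiction. Suppose there were only finitely many critical orbits, equivalently that $|E_H(z)| \leq N$ for every critical orbit $z$ and some constant $N$. Then $Crit^{[-N,N]}(E_H)$ is finite, and in particular only finitely many relative indices appear, so the equivariant chain complex $C_k^{[-N,N],S^1}(H)$ vanishes for all but finitely many $k$. Interpreting the full homology $H_*^{S^1}(H)$ as the direct limit $\varinjlim_{n} H_*^{[-n,n],S^1}(H)$ under the continuation maps from Section 5, the hypothesis that all critical orbits lie in $[-N,N]$ forces $H_k^{S^1}(H)=H_k^{[-N,N],S^1}(H)$, which vanishes for $|k|$ large. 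This contradicts Theorem 2.3, which asserts $H_k^{S^1}(H)=\mathbb{Z}_2$ in every even degree $k$ (and the analogous statement $H_k^{\mathbb{Z}_2}(H)=\mathbb{Z}_2$ for all $k$ in the even case). Applying the same contradiction to an arbitrary bound $M$ in place of $N$ upgrades the conclusion to divergence: for every $M$ there is a critical orbit with $|E_H(z)|>M$. A sign inspection in the identity $\lambda \int_M(\langle h(x,u),u\rangle - H(x,u)) + \lambda = E_H(z)$ at a critical point, combined with $(H2)$, forces a lower bound on $E_H(z_k)$, so in fact $E_H(z_k)\to +\infty$.

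The main obstacle is passing from the Morse perturbation back to the unperturbed equation $Du=\lambda h(x,u)$. The plan is a diagonal argument: pick Morse perturbations $K_n\to 0$ in $C^3$; for each $n$ the argument above gives infinitely many solutions $\{z_k^n\}_k$ of the perturbed problem with $E_{H+K_n}(z_k^n)\to\infty$. The uniform $(PS)$-type estimates of Section 4.2 and the stability estimates of Section 5 are uniform in sufficiently small perturbations on any fixed energy window, so the Sobolev embedding extracts, along a diagonal subsequence, limit critical points of $E_H$. The distinctness of infinitely many limits is ensured because continuation preserves the relative index and each fixed index class contains only finitely many critical orbits in a bounded window, forcing the limit orbits to realize infinitely many distinct indices and therefore infinitely many distinct energy levels tending to infinity.
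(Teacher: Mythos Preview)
Your contradiction argument---bounded energy forces a finite equivariant chain complex, contradicting $H_k^{S^1}(H)=\mathbb{Z}_2$ in every even degree---is precisely the mechanism the paper uses. The paper phrases it as a dichotomy (either boundedness of the relative index implies boundedness of the energy, in which case the infinitely many nontrivial homology degrees force critical points of arbitrarily large index and hence arbitrarily large energy; or else some fixed index already carries critical points of unbounded energy), but your single contradiction collapses both cases and is arguably cleaner.

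Two comments on the extra steps you add beyond the paper. First, the sign inspection does not go through: already in the linear model $H_0=\tfrac12|s|^2$ the critical values of $E_{H_0}$ are exactly the eigenvalues $\lambda_k$, which are unbounded below as well as above, so $(H2)$ alone cannot pin down the sign and force $E_H(z_k)\to+\infty$. The paper's proof asserts only divergence and should be read that way. Second, the diagonal passage from Morse perturbations back to the original $H$ is a legitimate point the paper does not spell out; your sketch is the right idea, but the assertion that ``continuation preserves the relative index'' of \emph{limit} critical points (as opposed to flow lines) needs justification. A more direct route is available: if $E_H$ had all critical values in $[-N,N]$, then for any sequence $K_n\to 0$ in $C^3$, a sequence of critical points of $E_{H+K_n}$ with energies in a fixed bounded window outside $[-N,N]$ is a (PS) sequence for $E_H$ and would converge to a critical point of $E_H$ in that window, a contradiction; hence for $n$ large $E_{H+K_n}$ also has all critical values in $[-N-1,N+1]$, contradicting the Morse case you already established.
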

\begin{proof}
In fact here we distinguish two cases, either we have the property that boundedness on the index implies boundedness of the energy and thus the result follows directly from the computation of the equivariant homology in theorem 2.2. Moreover we have a sequence of critical points with energy going to infinity as the index goes to infinity. \\
In the other case the theorem holds since we have an unbounded sequence of critical points with the same index.
\end{proof}
\begin{corollary}
The problem $Du=f(x)|u|^{p-1}u$ with $f\in L^{\infty}(M)$ and positive, has infinitely many solutions with energy going to $+\infty$.
\end{corollary}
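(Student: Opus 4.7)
The plan is to realize the corollary as a direct application of Theorem 8.2, with the Hamiltonian
\[
H(x,s) = \frac{f(x)}{p+1}|s|^{p+1}, \qquad h(x,s) = f(x)|s|^{p-1}s,
\]
and then convert the critical points $(u_k,\lambda_k)$ of the Rabinowitz functional $E_H$ into genuine solutions of $Du=f(x)|u|^{p-1}u$ by exploiting the homogeneity of $h$ to absorb the Lagrange multiplier.

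First I would verify the standing hypotheses. Since $\partial_s h(x,s)$ behaves like $pf(x)|s|^{p-1}$, condition $(H1)$ is immediate from the subcriticality $p<\frac{n+1}{n-1}$ assumed in the statement and the fact that $f\in L^{\infty}(M)$. For $(H2)$, the Euler identity gives
\[
\langle h(x,s),s\rangle - H(x,s) = \frac{p}{p+1}\,f(x)|s|^{p+1},
\]
which dominates $c_1|s|^{p+1}-c_2$ provided $\inf_M f>0$, a property that comes for free once $f$ is, say, continuous and positive on the compact manifold $M$ (and may be taken as an implicit assumption on the bounded positive $f$ in the statement). Moreover $H$ is both $S^1$-invariant and even because $|e^{i\theta}s|^{p+1}=|{-s}|^{p+1}=|s|^{p+1}$, so Theorem 8.2 applies in either equivariant incarnation.

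Theorem 8.2 then produces a sequence of distinct critical points $z_k=(u_k,\lambda_k)$ of $E_H$ with $E_H(z_k)\to+\infty$. At any such critical point, $\int_M H(x,u_k)=1$ and Euler's relation $\int_M\langle h(x,u_k),u_k\rangle = (p+1)\int_M H(x,u_k) = p+1$ combine with $\frac{1}{2}\int_M\langle Du_k,u_k\rangle = \frac{\lambda_k}{2}\int_M\langle h(x,u_k),u_k\rangle$ to give
\[
E_H(z_k) = \frac{p+1}{2}\,\lambda_k,
\]
so $\lambda_k\to+\infty$; in particular $\lambda_k>0$ for all sufficiently large $k$.

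For those $k$ I would set $v_k = \lambda_k^{1/(p-1)}u_k$. The homogeneity $|cu|^{p-1}(cu)=c^p|u|^{p-1}u$ together with $Du_k = \lambda_k f(x)|u_k|^{p-1}u_k$ gives $Dv_k = f(x)|v_k|^{p-1}v_k$, so each $v_k$ solves the required equation. The natural energy
\[
F(v_k) = \frac{1}{2}\int_M\langle Dv_k,v_k\rangle - \frac{1}{p+1}\int_M f(x)|v_k|^{p+1} = \frac{p-1}{2(p+1)}\int_M f(x)|v_k|^{p+1}
\]
then equals a positive power of $\lambda_k$ times a positive constant and diverges. The only genuine obstacle is ensuring the $v_k$ remain pairwise distinct after the rescaling, but this is automatic from $F(v_k)\to+\infty$: eventually the energies are all different, so the solutions are too.
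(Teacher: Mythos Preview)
Your argument is correct and is exactly the intended route: the paper states the corollary without proof immediately after Theorem 8.2, so the implicit argument is to apply Theorem 8.2 to $H(x,s)=\tfrac{f(x)}{p+1}|s|^{p+1}$ and then rescale by $\lambda_k^{1/(p-1)}$ to remove the Lagrange multiplier, just as you do. Your verifications of $(H1)$, $(H2)$, the $S^1$/$\mathbb{Z}_2$ invariance, the identity $E_H(z_k)=\tfrac{p+1}{2}\lambda_k$, and the energy divergence $F(v_k)=\tfrac{p-1}{2}\lambda_k^{(p+1)/(p-1)}\to\infty$ are all accurate; the only implicit hypothesis you flag---that $\inf_M f>0$---is indeed needed for $(H2)$ and is what ``positive'' should mean here.
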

\subsection{The critical case}
Let us recall here a version of the concentration-compactness proved for the Dirac Operator in the critical case.\\
Consider the following critical problem 
\begin{equation} \tag{$P_{\eta}$} \label{Pb crit}
Du=\eta u + |u|^{\frac{2}{n-1}}u.
\end{equation}

Its relative energy functional is then $$E^{\eta}(u)=\frac{1}{2}\int_{M}(<Du,u>-\eta \int_{M}u^2)-\frac{n-1}{2n}\int_{M}|u|^{\frac{2n}{n-1}}.$$ Hence the following holds :
\begin{lemma}[Isobe \cite{Iso2}]
Let $u_{i}$ be a (PS) sequence for the functional $E^{\eta}$ then there exist $k\geq 0$ and sequences $a_{i}^{j}\longrightarrow a^{j} \in M$, for $1\leq j \leq k$ and a sequence of numbers $R_{i}^{j}$ converging to zero, a solution $u_{\infty}\in H^{\frac{1}{2}}(M)$ of problem (\ref{Pb crit}) and solutions $u^{j} \in H^{\frac{1}{2}}(\mathbb{R}^{n})$ of $Du=|u|^{\frac{2}{n-1}}u$ on $\mathbb{R}^{n}$, such that up to subsequence, we have $$ u_{i}=u_{\infty}+\sum_{j=1}^{k}\omega_{i}^{j}+o(1) \text{ in } H^{\frac{1}{2}}(M),$$
where $$\omega_{i}^{j}=(R_{i}^{j})^{-\frac{n-1}{2}}\beta_{j}(x)(\rho_{i,j}^{-1})^{\ast}(u^{j})$$ and $$\rho_{i,j}(x)=\exp_{a^{j}}(R_{i}^{j}x),$$ here $\beta_{k}$ is a non negative function equals to $1$ in $B_{1}(a^j)$ and zero outside $B_{2}(a^j)$. Moreover, we have $$E^{\eta}(u_{i})=E^{\eta}(u_{\infty})+\sum_{j=0}^{k}E(u^{j}) +o(1).$$
\end{lemma}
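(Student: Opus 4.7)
The plan is to adapt the standard Struwe-type bubbling analysis to the Dirac operator on a compact spin manifold. The first step is to establish a uniform $H^{1/2}$-bound on the (PS) sequence $(u_i)$: combining the energy bound with the Ambrosetti--Rabinowitz type identity built into the critical nonlinearity $|u|^{2/(n-1)}u$ gives an $L^{2n/(n-1)}$-bound, and then, splitting $u_i = u_i^{+} + u_i^{-}$ and testing the gradient equation against each component exactly as in the proof of Lemma 4.1, I would bound $\|u_i^{+}\|$ and $\|u_i^{-}\|$ separately. From this I extract a subsequence with $u_i \rightharpoonup u_\infty$ weakly in $H^{1/2}(\Sigma M)$; the weak continuity of $D$ and the compactness of the embedding $H^{1/2}(\Sigma M) \hookrightarrow L^q(\Sigma M)$ for any sub-critical $q$ identify $u_\infty$ as a weak solution of $(P_\eta)$.

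Next I set $v_i^{(0)} = u_i - u_\infty$ and analyse the measures $d\mu_i = |v_i^{(0)}|^{2n/(n-1)}\,dv_g$. If $\|v_i^{(0)}\|_{L^{2n/(n-1)}} \to 0$, the lemma holds with $k=0$. Otherwise, by the concentration-compactness principle adapted to $H^{1/2}$-spinors, $\mu_i$ concentrates at finitely many points; pick one concentration point and choose $a_i^1 \to a^1$ together with scales $R_i^1 \to 0$ normalising the mass on a ball of unit rescaled radius. After fixing a local trivialisation of $\Sigma M$ around $a_i^1$ via parallel transport along radial geodesics, set
\[
\tilde u_i^{(1)}(x) \;=\; (R_i^1)^{(n-1)/2}\, u_i\bigl(\exp_{a_i^1}(R_i^1 x)\bigr).
\]
The pulled-back and rescaled metric $(R_i^1)^{-2}(\exp_{a_i^1})^{\ast}g$ converges in $C^{\infty}_{\mathrm{loc}}$ to the Euclidean metric, the scalar-curvature and $\eta u$ terms scale away by positive powers of $R_i^1$, and the critical nonlinearity is scale-invariant. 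Therefore $\tilde u_i^{(1)} \rightharpoonup u^{1}$ in $H^{1/2}_{\mathrm{loc}}(\mathbb{R}^n)$ with $u^1$ a non-trivial solution of $Du = |u|^{2/(n-1)}u$ on $\mathbb{R}^n$.

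I then define the bubble $\omega_i^1$ as in the statement using the cut-off $\beta_1$ supported in a normal coordinate neighbourhood of $a^1$, and show by a Brezis--Lieb type argument for spinors that $v_i^{(1)} := v_i^{(0)} - \omega_i^1$ is again a (PS) sequence with
\[
E^{\eta}(v_i^{(1)}) = E^{\eta}(u_i) - E^{\eta}(u_\infty) - E(u^{1}) + o(1).
\]
Iterating the construction on $v_i^{(1)}$ produces further bubbles. The procedure terminates in finitely many steps because every non-trivial $H^{1/2}(\mathbb{R}^n)$ solution of the limit equation has energy bounded below by a universal constant $\varepsilon_0 > 0$ (consequence of the flat spinor Sobolev inequality), while the total energy along the sequence is bounded. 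The decomposition $u_i = u_\infty + \sum_{j=1}^k \omega_i^j + o(1)$ in $H^{1/2}(\Sigma M)$ and the corresponding energy identity follow by combining these ingredients with the asymptotic orthogonality of the bubbles (ensured by the separation of scales $R_i^j/R_i^{j'} + R_i^{j'}/R_i^j + d(a_i^j,a_i^{j'})^2/(R_i^j R_i^{j'}) \to \infty$).

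The main obstacle will be making the rescaling step genuinely rigorous. Spinor bundles are not canonically trivial, so one must carefully fix local trivialisations along radial geodesics from $a_i^j$ and transport them consistently; one then has to show that the rescaled Dirac operators, acting on spinors with values in the fixed fibre $\Sigma_n$, converge (in an appropriate operator-norm sense on bounded sets) to the flat Dirac operator on $(\mathbb{R}^n,\Sigma_n)$, and that the curvature contributions from the Schrödinger--Lichnerowicz formula and the sub-critical term $\eta u$ vanish in the limit at the right rate. Once this technical point is settled, the asymptotic orthogonality of the bubbles and the energy decomposition are routine, and the details of the argument can be imported from \cite{Iso2}.
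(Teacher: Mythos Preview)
The paper does not give its own proof of this lemma: it is quoted as a result of Isobe \cite{Iso2} and simply cited without argument. So there is nothing to compare your proposal against in the present paper.

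That said, your outline is the standard Struwe-type bubbling analysis, which is indeed the method used in \cite{Iso2}. The steps you list --- uniform $H^{1/2}$-bound from the Ambrosetti--Rabinowitz structure, extraction of a weak limit solving $(P_\eta)$, concentration analysis on the residual $v_i^{(0)}$, rescaling near a concentration point to produce a bubble on $\mathbb{R}^n$, Brezis--Lieb splitting, and termination via the energy quantum from the sharp spinor Sobolev inequality --- are exactly the ingredients Isobe assembles. You have also correctly identified the one genuinely new difficulty compared with the scalar case: the spinor bundle has no canonical global trivialisation, so the rescaling has to be performed after fixing a local trivialisation via radial parallel transport (the Bourguignon--Gauduchon construction \cite{Bour}), and one must check that the rescaled Dirac operators converge to the flat one. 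This is handled in \cite{Iso2} and in the related work \cite{Amm2}; once it is in place, the rest of the argument is routine. Your sketch is therefore an accurate summary of the cited proof rather than an alternative to it.
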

If we go back to the previous proofs, leading us to compute the homology, we see that the main assumptions that are needed are just for the sake of the (PS) condition to hold and for the form of the non-linearity to get compactness in section 4. Notice also that for the stability we needed an adequate isolating neighborhood for the the non-autonomous flow to be construct.\\ 
We define $$Y(D,g_{0},M)=\inf_{g\in[g_{0}]}(\lambda_{1}(g)(D))$$ where $[g_{0}]$ is the set of all the metrics conformal to $g_{0}$ with constant volume $1$. This constant is equivalent to the Yamabe constant in the case of the conformal Laplacian $L_{g}$. In fact it coincides exactly with the Yamabe constant in that case. \\
Now we can easily prove the following result (as in  \cite{Rau})
\begin{theorem}
If $Y(D,g_{0},M)<Y(D,S^{n})=c(n),$ then problem ($P_{\eta}$) with $\eta=0$ has at least one solution.
\end{theorem}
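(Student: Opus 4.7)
My plan is to adapt the Rabinowitz-Floer machinery of Sections 3--7 to the critical exponent setting by working in a restricted energy window in which the Palais-Smale condition is still available.

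First I would analyse where (PS) can fail for the Rabinowitz functional $E(u,\lambda)=\frac{1}{2}\int\langle Du,u\rangle - \lambda(\frac{n-1}{2n}\int|u|^{2n/(n-1)}-1)$ in the critical case. By the concentration-compactness lemma of Isobe quoted just above, the failure is governed by bubbles $u^{j}$ on $\mathbb{R}^{n}$ solving the limiting equation $Du=|u|^{2/(n-1)}u$, and each such bubble carries action at least $c(n)=Y(D,S^{n})$. Consequently the functional $E$ satisfies (PS) at every level strictly below $c(n)$, and the Fredholm and transversality arguments of Section 3 remain valid for flow lines whose energies lie in a window $[a,b]$ with $b<c(n)$.

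Second, mirroring Section 4 and using the cut-off $\rho_{R}$ to legitimize the chain-level constructions, I would define a local Rabinowitz-Floer homology $H_{*}^{[a,b]}(H)$ for the critical Hamiltonian $H(x,s)=\frac{n-1}{2n}|s|^{2n/(n-1)}$ on windows $[a,b]$ with $b<c(n)$. All of the compactness, gluing, transversality, and stability arguments of Sections 4--6 carry over verbatim inside such a window, so the stability proposition of Section 5 lets me identify $H_{*}^{[a,b]}(H)$ with $H_{*}^{[a,b]}(H_{0})$ of the linear model; by Section 7 the latter is non-trivial whenever $[a,b]$ straddles an eigenvalue-level of $D$.

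Third, the hypothesis $Y(D,g_{0},M)<c(n)$ enters precisely as the device that places a non-trivial class of the linear homology genuinely below the compactness threshold. Using a test spinor arising from a conformal metric $g\in[g_{0}]$ with $\lambda_{1}(D_{g})<c(n)$, I would construct an admissible min-max path of $(u,\lambda)$-pairs whose Rabinowitz action is strictly bounded above by $c(n)$, so that a non-zero class in $H_{*}^{[a,b]}$ persists for some $a<0<b<c(n)$. The non-triviality of $H_{*}^{[a,b]}(H)$ then forces a critical point $(u,\lambda)$ of $E$ in this window, that is a solution of $Du=\lambda|u|^{2/(n-1)}u$ with $\frac{n-1}{2n}\int|u|^{2n/(n-1)}=1$. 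Since $\lambda>0$ by testing against $u$, the standard rescaling $v=\lambda^{(n-1)/2}u$ converts this into a genuine non-trivial solution of $(P_{0})$.

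The main obstacle I expect is the third step: producing a test configuration whose Rabinowitz action lies strictly below $c(n)$. The sharp energy bound plays exactly the role of the Aubin-Schoen expansion in the classical Yamabe problem, and it requires unwinding the definition of $Y(D,g_{0},M)$ in terms of constrained first eigenvalues under conformal change and then promoting the minimising spinor to a genuine cycle in the Rabinowitz-Floer complex (rather than merely a point). Once this energy estimate is secured, the compactness and homological input from the preceding sections closes the argument routinely.
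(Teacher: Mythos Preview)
Your overall strategy coincides with the paper's, but you have inverted the location of the real difficulty. Step~3 is not an obstacle at all: once you fix a conformal metric $g_{1}\in[g_{0}]$ with $\lambda_{1}(D_{g_{1}})<c(n)$, the pair $(\lambda_{1},\psi_{1})$ (with $\psi_{1}$ normalised so that $\int H_{0}(\psi_{1})=1$) is already a genuine critical point of the linear Rabinowitz functional $E_{H_{0}}$, and its value is exactly $\lambda_{1}$. There is no need for any test configuration, min-max path, or Aubin--Schoen type expansion; the non-triviality of $H_{*}^{[0,c(n))}(H_{0})$ is immediate from Section~7.

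The actual gap is in your step~2, which you describe as carrying over ``verbatim''. The stability proposition of Section~5 relies on the subcritical growth hypothesis $(H1)$ to control the non-autonomous flow; in the critical case this fails, and nothing prevents the continuation flow from escaping the window $[0,c(n))$ into the region where (PS) breaks down. The paper does not attempt to run the stability argument directly. Instead it argues by contradiction: assuming $E_{|s|^{p+1}}$ has no critical point in $[0,c(n))$, the set $\tilde{N}=\{(t,z):0\le E_{t}(z)<c(n)\}$ becomes an isolating neighbourhood in the sense of Conley for the non-autonomous flow, because any flow line trapped in $\tilde{N}$ would have to converge at $-\infty$ to a (nonexistent) critical point of the critical functional. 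This is what legitimises the continuation isomorphism $H_{*}^{[0,c(n)),S^{1}}(|s|^{p+1})\cong H_{*}^{[0,c(n)),S^{1}}(H_{0})$ and produces the contradiction. Your proposal needs this isolating-neighbourhood step (or an equivalent mechanism) in place of the appeal to Section~5.
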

\begin{proof}
Since $Y(D,g_{0},M)<Y(D,S^{n})$, there exists a metric $g_{1}$ with volume one, such that $\lambda_{1}(g_{1})< Y(D,S^{n})=c(n)$. Now using Lemma 8.4, we can see that $E_{|s|^{p+1}|}$ for $p=\frac{n+1}{n-1}$ satisfies the (PS) condition in the interval $[0,c(n))$. Now we want to show that $H_{*}^{[0,c(n)),S^{1}}(|s|^{p})=H_{*}^{[0,c(n)),S^{1}}(H_{0})$.\\
For the sake of contradiction, let us assume that $E_{|s|^{p+1}}$ has no critical point in $[0,c(n))$ and define the functional $E_{t}=\eta(t)E_{|s|^{p+1}}+(1-\eta(t))E_{H_{0}}$. Then we claim that the set $\tilde{N}$ defined by $$\tilde{N}=\{ (t,z)\in [-\epsilon,1+\epsilon]\times \mathcal{H}; 0\leq E_{t}(z)< c(n)\}$$
is an isolating neighborhood in the sense of Conley, for the flow $\frac{\partial z}{\partial t}=\nabla E_{t}(z)$.
Indeed if $z(t)$ is a flow-line, that stays in $\tilde{N}$ for all $t$ then when $t\longrightarrow -\infty$ $z(t)$ converges to a critical point of $E_{|s|^{p+1}}$ in the energy level $[0,c(n))$ which is impossible.
Thus the same procedure can be carried on as in section 7, to show that $H_{*}^{[0,c(n)),S^{1}}(|s|^{p})=H_{*}^{[0,c(n)),S^{1}}(H_{0})$. Now to finish the proof, it is enough to notice that for the metric $g_{1}$, the critical point corresponding to the first eigenvalue $\lambda_{1}$ is in the energy level $[0,c_{n})$ therefore $H_{*}^{[0,c(n))}(H_{0})\not = 0$ for $*=0$. Hence $H_{0}^{[0,c(n)),S^{1}}(|s|^{p+1})\not =0$ and we do have a critical point.
\end{proof}
Remark that what was helpful in the previous proof is the fact that the problem is conformally invariant. But if we consider the Brezis-Nirenberg type problem that is a problem with $\eta \not =0$, the invariance is broken. Despite this fact, we get the following result :
\begin{theorem}
If $\lambda_{1}>\eta>\lambda_{1}-c(n)$, then problem (\ref{Pb crit}) has at least one solution.
\end{theorem}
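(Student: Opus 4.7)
The plan is to run the Rabinowitz-Floer scheme of Theorem 8.5 for the shifted functional
$$E^{\eta}(u,\lambda) = \frac{1}{2}\int_M(\langle Du,u\rangle - \eta |u|^{2}) - \lambda\int_M(H(x,u)-1)$$
on $\mathcal{H}\times\mathbb{R}$, with $H(x,s) = \frac{n-1}{2n}|s|^{2n/(n-1)}$. A critical point $(u,\lambda)$ satisfies $(D-\eta)u = \lambda |u|^{2/(n-1)}u$ together with $\int H(x,u) = 1$, and any such pair with $\lambda>0$ yields a solution of \eqref{Pb crit} after the rescaling $v=\lambda^{(n-1)/2}u$. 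The target is to show that $H_{*}^{[0,c(n)),S^{1}}\neq 0$ for the modified functional $E^{\eta}$, exactly as in Theorem 8.5.

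First I verify (PS) in the band $[0,c(n))$. Applying Lemma 8.4 to a (PS) sequence for $E^{\eta}$, one gets a weak limit $u_\infty$ solving \eqref{Pb crit} and bubbles $u^j\in H^{1/2}(\mathbb{R}^n)$ that solve the $\eta$-free equation $Du = |u|^{2/(n-1)}u$ on $\mathbb{R}^n$ (the shift scales away under blow-up). The sharp Sobolev-Dirac inequality on the sphere gives $E(u^j)\geq c(n)$ for each bubble, and the Nehari identity at $u_\infty$ gives $E^{\eta}(u_\infty)\geq 0$. Since $E^{\eta}(u_i)=E^{\eta}(u_\infty)+\sum_j E(u^j)+o(1)<c(n)$, bubbling is precluded and convergence is strong.

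Next I compute the linear homology for $H_0(s)=\tfrac{1}{2}|s|^{2}$. The critical points of $E^{\eta}$ with $H_0$ are eigenpairs $(u,\mu)$ of $D-\eta$ normalized by $\|u\|_{L^{2}}^{2}=2$ from the constraint, with energy $E^{\eta}(u,\mu)=\mu=\lambda_k-\eta$ by a direct computation. The hypothesis $\lambda_{1}-c(n)<\eta<\lambda_{1}$ is exactly $0<\lambda_{1}-\eta<c(n)$, so the first eigenfunction produces a critical point at positive energy $\lambda_{1}-\eta\in (0,c(n))$. As in Section 7, this yields a nontrivial $\mathbb{Z}_{2}$-generator in $C_{*}^{[0,c(n)),S^{1}}$ at an even relative index, and the equivariant boundary on the truncated chain complex vanishes by the parity-gap argument, so this generator survives to homology and $H_{*}^{[0,c(n)),S^{1}}(H_0)\neq 0$ for $E^{\eta}$.

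Finally I interpolate from $H_0$ to $H$ via $H_t=(1-\chi(t))H_0+\chi(t)H$ (with $\chi$ the standard smooth cutoff of Section 5 and the shift $-\tfrac{\eta}{2}|u|^{2}$ kept fixed throughout) and verify that
$$\tilde{N}=\{(t,z)\in[-\epsilon,1+\epsilon]\times\mathcal{H}\times\mathbb{R}: 0\leq E^{\eta}_t(z)<c(n)\}$$
is a Conley isolating neighborhood for the non-autonomous gradient flow, following the end of the proof of Theorem 8.5. The continuation morphism of Section 5 will then deliver $H_{*}^{[0,c(n)),S^{1}}(H)\cong H_{*}^{[0,c(n)),S^{1}}(H_0)\neq 0$, hence a critical point of $E^{\eta}$ in the band. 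The main obstacle is precisely this isolating-neighborhood step: the linear shift destroys conformal invariance, so the small-perturbation bounds of Lemma 5.1 do not apply directly across the full homotopy. My remedy will be to use (PS) at both endpoints inside $[0,c(n))$ from Step 1, together with the fact that bubbling is the only source of non-compactness in the critical case and is ruled out strictly below $c(n)$, to exclude trajectories of the maximal invariant set from escaping through the upper boundary $E^{\eta}_t=c(n)$.
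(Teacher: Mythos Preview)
Your approach is essentially the paper's own: replace $D$ by the shifted operator $D-\eta$ and rerun the argument of Theorem~8.5, using that the first eigenvalue $\lambda_{1}-\eta$ of the shifted operator falls into $(0,c(n))$ precisely when $\lambda_{1}-c(n)<\eta<\lambda_{1}$. The paper's proof is a two-line remark to this effect; you have spelled out the details (PS in the band via Lemma~8.4, the linear homology computation, and the interpolation/isolating-neighborhood step) correctly.

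One comment on your closing caveat: the loss of conformal invariance is not actually an obstruction to the isolating-neighborhood step. In the proof of Theorem~8.5, conformal invariance is used only to arrange $\lambda_{1}(g_{1})<c(n)$ by choosing a metric in the conformal class; the Conley argument itself relies solely on (PS) holding in $[0,c(n))$, which you have already established for $E^{\eta}$ via bubbling analysis. So your ``remedy'' is in fact the argument, not a patch, and no further work is needed there.
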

The proof here follows the same principle as above, since (PS) holds in $[0,c(n))$ hence, we apply our result to the operator $D-\eta$ instead of $D$. That is we shift the spectrum by $\eta$.\\
In fact the multiplicity result in this case is tied to a better understanding of the violation of the (PS) condition. For instance if it was discrete then we can try to isolate higher eigenvalues in between to gain compactness. This needs a classification of the solutions in the Euclidean case.
\subsection{Some remarks on the Yamabe problem}
In this case we will deal with the classical Yamabe problem on a compact manifold and try to see what one would get if we apply the previous method to the conformal Laplacian instead. One needs to be careful since we do not have any information from our construction on the sign of the solutions. That is, the solutions that we get might change sign.\\
Consider the following problem : $$L_{g}u=|u|^{p-1}u \label{Yam}$$ where $L_{g}=-\Delta_{g}+a_{n}R_{g}$ and $p=\frac{n+2}{n-2}$. It is known that the (PS) condition for the associated functional is violated only at given levels $b_{k}=(2\pi)^{2}\omega_{n}^{\frac{-2}{n}}k^{\frac{2}{n}}$ and away from that (PS) holds. Here, we have a better understanding on the loss of compactness compared to the Dirac operator.\\
Recall also that $$Y(M,g)=\inf_{u\in H^{1}(M);||u||_{L^{\frac{2n}{n-2}}}=1} \int |\nabla u|^{2}+a_{n}Ru^{2}$$
It is easy to see that if $\lambda_{1}(L_{g},M)$ denote the first eigenvalue of $L_{g}$, then $$Y(M,g)=\inf_{\tilde{g}\in [g]} \lambda_{1}(L_{\tilde{g}})Vol(\tilde{g})^{\frac{-1}{n}},$$ where $[g]$ is the conformal class of $g$. Hence if we fix the volume of our metrics to be always one, the following holds. 
\begin{theorem}
The changing sign Yamabe problem on a compact manifold has at least one solution.
\end{theorem}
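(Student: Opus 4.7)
The plan is to transplant the proof of Theorem 8.6 to the conformal Laplacian $L_g = -\Delta_g + a_n R_g$, exploiting the fact that the associated functional
$$E(u,\lambda) = \frac{1}{2}\int_M (|\nabla u|^2 + a_n R u^2) - \lambda \int_M \left(\frac{1}{p+1}|u|^{p+1} - 1\right),$$
with $p = \frac{n+2}{n-2}$, admits the Rabinowitz-Floer construction of Sections 3-7 once the working range is confined below the first bubble level. The concentration-compactness picture recalled at the start of Section 8.3 tells us that the (PS) defect is concentrated at the discrete levels $b_k = (2\pi)^2 \omega_n^{-2/n} k^{2/n}$, so (PS) holds throughout $[0, b_1)$.

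The next ingredient is a conformal representative with small first eigenvalue. Using
$$Y(M,g) = \inf_{\tilde g \in [g]} \lambda_1(L_{\tilde g}) \mathrm{Vol}(\tilde g)^{-1/n},$$
together with the classical strict Yamabe inequality $Y(M,g) < Y(S^n)$ (and a direct construction in the sphere case), I would pick $\tilde g \in [g]$ of unit volume with $\lambda_1(L_{\tilde g}) < b_1$. The corresponding first eigenvalue-eigenfunction pair then furnishes a critical point of the linear functional $E_{H_0}$, where $H_0(s) = \tfrac{1}{2}|s|^2$, at an energy strictly inside $[0, b_1)$. The homology computation of Section 7 now applies as in the Dirac case: the first generator survives in the truncated chain complex $C_*^{[0,b_1)}(H_0)$ because there is no lower-energy generator to serve as its boundary, so $H_*^{[0,b_1)}(H_0) \neq 0$; one may equally well work with the $\mathbb{Z}_2$-equivariant variant afforded by the symmetry $u \mapsto -u$ of the critical nonlinearity.

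The final step is the stability argument, run exactly as in Theorem 8.6 via a Conley-type isolating neighborhood
$$\tilde N = \{(t,z) \in [-\varepsilon, 1+\varepsilon] \times \mathcal{H} : 0 \leq E_t(z) < b_1\}$$
for the homotopy $E_t = (1-\eta(t))E_{H_0} + \eta(t)E_{|s|^{p+1}/(p+1)}$. Because (PS) holds strictly below $b_1$, flow-lines in $\tilde N$ cannot escape to infinity, and the continuation isomorphism of Section 5 carries the nonzero class over to the critical-exponent functional. The resulting critical point $(u,\lambda)$ satisfies $L_g u = \lambda |u|^{p-1}u$ with $\int |u|^{p+1}/(p+1) = 1$; rescaling $v = \lambda^{(n-2)/4} u$ yields a (possibly sign-changing) solution of $L_g v = |v|^{p-1} v$, which is the required Yamabe solution since the variational framework enforces no positivity constraint. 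The main obstacle is verifying that the ground-state energy of $E_{H_0}$ for the chosen representative $\tilde g$ remains strictly below the bubble threshold $b_1$; this is precisely the role of the strict Yamabe inequality, and without it the isolating neighborhood argument would break down at the critical exponent.
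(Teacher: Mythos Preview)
Your argument is correct and follows the template of Theorem~8.5 closely, but it diverges from the paper's own proof in how the borderline case $Y(M,g)=b_{1}$ is handled. You dispose of it by invoking the strict Aubin--Schoen inequality $Y(M,g)<Y(S^{n})$ whenever $(M,[g])$ is not conformally the round sphere, and treat the sphere separately by exhibiting the standard metric. The paper, by contrast, never appeals to Aubin--Schoen: it splits into the two cases $Y(M,g)<b_{1}$ and $Y(M,g)=b_{1}$, and in the second case either the infimum defining $Y$ is attained by the first eigenfunction (giving a solution directly), or else $\lambda_{1}(L_{g},M)>b_{1}$ and, after a conformal adjustment, $\lambda_{1}<b_{2}$; since the Palais--Smale condition also holds on the window $(b_{1},b_{2})$, the same isolating-neighbourhood continuation argument runs there and produces a critical point with energy in $(b_{1},b_{2})$.

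The trade-off is this: your route keeps everything in a single energy window $[0,b_{1})$ and is conceptually cleaner, but it imports a result---the strict Yamabe inequality---whose proof (Aubin for $n\geq 6$ non-locally-conformally-flat, Schoen via positive mass otherwise) is at least as deep as the existence statement being established, so the argument is not self-contained. The paper's two-window approach stays entirely within the Rabinowitz--Floer machinery and the elementary concentration-compactness description of the $b_{k}$, which is in keeping with the spirit of the section.
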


\begin{proof}
The first case to be considered is when $Y(M,g)<b_{1}$. In this case we can always do a conformal change to the metric to get $\lambda_{1}(L_{g},M)<b_{1}$. This is done, we have in that case that $H^{[0,b_{1})\mathbb{Z}_{2}}_{*}(L_{g})\not = 0$ again as in Theorem 8.5 we get that $H^{[0,b_{1}),\mathbb{Z}_{2}}_{*}(L_{g},|u|^{p+1},)\not = 0$, therefore we have at least one solution in that energy range.\\
The other case is when $Y(M,g)=b_{1}=Y(S^{n},g_{0})$. So here, either $\lambda_{1}(L_{g},M)=Y(M,g)$ and the problem is solved since the inf is achieved by the eigenfunction. Or $\lambda_{1}(L_{g},M)>Y(M,g)$ and we can assume that up to a conformal change we have $\lambda_{1}(L_{g},M)<b_{2}$. Hence, since (PS) holds in $(b_{1},b_{2})$, the same reasoning can be done as in the previous case to get $H^{(b_{1},b_{2}),\mathbb{Z}_{2}}_{*}(L_{g},|u|^{p+1})\not = 0$. Therefore, we have at least one solution in that range.
\end{proof}
One has to notice that in fact the number of solutions is at least the number of eigenvalues that differs from the constants $b_{k}$. But recall the Weyl's asymptotic for the eigenvalues, that is $\lambda_{k}\thicksim (2\pi)^{2}\omega_{n}^{\frac{-2}{n}}k^{\frac{2}{n}}=b_{k}$. So unless we have exact equality in the asymptotic formula, we have infinitely many solutions. In particular if we have a repeated eigenvalue we have existence of two solutions in the previous theorem.\\

One also have the following result for the case of an open bounded set $\Omega$ of $R^{n}$. That is, if we consider the problem 
\begin{equation} \label{Crit op}
\left\{ \begin{array}{lll}
-\Delta u&=& |u|^{\frac{4}{n-2}}u \text{ in } \Omega\\
u&=&0 \text{ in } \partial \Omega
\end{array}
\right.
\end{equation}
\begin{theorem}
Let $\Omega_{\varepsilon}$ be a family of shrinking annuli with volume one, in $\mathbb{R}^{n}$, then there exists a solution for the changing sign Yamabe problem such that $E(u_{\epsilon})$ goes to infinity as $\varepsilon$ converges to zero.
\end{theorem}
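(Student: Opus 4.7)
The plan is to exploit the fact that on a shrinking annulus the first Dirichlet eigenvalue $\lambda_1(\Omega_\varepsilon)$ of $-\Delta$ diverges, pushing the corresponding linear Rabinowitz critical level into ever higher Palais--Smale windows $(b_m, b_{m+1})$, and then to transfer the nontrivial local linear homology on such a window to the nonlinear functional via the continuation machinery of Sections~5--7.

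First I would verify eigenvalue divergence. Writing $\Omega_\varepsilon = \{r_1(\varepsilon) < |x| < r_2(\varepsilon)\}$ with unit volume and thickness $\tau(\varepsilon) := r_2(\varepsilon) - r_1(\varepsilon) \to 0$, the one-dimensional Poincar\'e inequality applied along the radial direction yields $\lambda_1(\Omega_\varepsilon) \geq \pi^2/\tau(\varepsilon)^2 \to \infty$. Consequently there exists $m = m(\varepsilon) \to \infty$ with $\lambda_1(\Omega_\varepsilon) \in (b_m, b_{m+1})$, and after an infinitesimal adjustment of $\varepsilon$ I may further arrange that $\lambda_1(\Omega_\varepsilon)$ is the unique linear critical level inside a slightly smaller sub-window
\[
I_\varepsilon \;:=\; [\,b_m + \delta, \; b_{m+1} - \delta\,].
\]

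Inside $I_\varepsilon$ the concentration-compactness lemma gives the Palais--Smale condition for $E_{|u|^{p+1}}$. I would then reproduce verbatim the isolating-neighborhood step from the proof of Theorem~8.7, with the interval $[0,c(n))$ replaced by $I_\varepsilon$: the set $\tilde N = \{(t,z) : E_t(z) \in I_\varepsilon\}$ attached to the homotopy $E_t = \eta(t) E_{|u|^{p+1}} + (1-\eta(t)) E_{H_0}$ is a Conley isolating neighborhood for the non-autonomous gradient flow, because the bubbling levels $b_k$ depend only on the Sobolev geometry of $\mathbb{R}^n$ and are therefore independent of $t$. The continuation isomorphism of Section~5, restricted to this isolated piece, then gives
\[
H_*^{I_\varepsilon, \mathbb{Z}_2}(|u|^{p+1}) \;\cong\; H_*^{I_\varepsilon, \mathbb{Z}_2}(H_0),
\]
and the right-hand side is nonzero since the pair $(\lambda_1(\Omega_\varepsilon), \pm \varphi_1)$, with $\varphi_1$ the first Dirichlet eigenfunction, contributes a $\mathbb{Z}_2$-equivariant generator inside $I_\varepsilon$. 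This produces a solution $u_\varepsilon$ of the critical Yamabe problem on $\Omega_\varepsilon$ with $E(u_\varepsilon) \in I_\varepsilon$, and since $b_m \to \infty$, $E(u_\varepsilon) \to \infty$ as $\varepsilon \to 0$.

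The main obstacle I expect is verifying rigorously that $\tilde N$ is genuinely isolating for the non-autonomous flow: one must rule out that a flow line escapes $I_\varepsilon$ through bubbling at an intermediate parameter $t \in (0,1)$, where the interpolated nonlinearity $\eta(t)|u|^{p+1} + (1-\eta(t))|u|^2/2$ is neither purely critical nor purely quadratic. The key observation is that the subcritical quadratic part contributes no bubbles, so the concentration levels along the entire homotopy remain the discrete set $\{b_k\}$; a uniform-in-$t$ version of the profile decomposition of Lemma~8.4 then delivers (PS) throughout $I_\varepsilon$ and closes the argument.
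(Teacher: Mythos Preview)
Your proposal is correct and follows essentially the same route as the paper's proof: use that $\lambda_1(\Omega_\varepsilon)\to\infty$ to trap it in a window $(b_{k_\varepsilon}, b_{k_\varepsilon+1})$ with $k_\varepsilon\to\infty$, and then invoke the isolating-neighborhood/continuation argument from the proof of the preceding theorem (Theorem~8.7) on that window. The paper's proof is a three-line sketch that simply asserts the eigenvalue blow-up and refers back to ``the same reasoning as before''; your version supplies the radial Poincar\'e justification for $\lambda_1\to\infty$ and spells out more carefully the uniform-in-$t$ (PS) issue for the interpolated functional, but neither the strategy nor the key ingredients differ.
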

\begin{proof}
This follows from the fact that the first eigenvalue of the Laplacian in this case blows-up when $\varepsilon$ converges to zero. Hence for $\varepsilon$ small enough, there exists $k_{\varepsilon}$ such that $b_{k_{\varepsilon}}<\lambda_{1}<b_{k_{\varepsilon}+1}$. Again using the same reasoning as before we get the desired result.
\end{proof}
In a first version of this paper, in Theorem 2.2 we made the assumption that the boundedness of the index implies the boundedness of the energy. For instance, this was proved for the case of the Laplacian operator as in \cite{BL} and for the case of systems with relative index as in \cite{An2}. This assumption was removed due to a change in the proof suggested by the referee.\\
 
\textbf{Conjecture: } We conjecture that in fact solutions to $Du=|u|^{p-1}u$ in $\mathbb{R}^{n}$ with finite relative Morse index should vanish for $p<\frac{n+1}{n-1}$. \\
 
\textbf{Acknowledgement:} I want to thank Dr. V.Martino for having carefully
proofread the manuscript. I also want to thank Professor A. Abbondandolo for his help and useful suggestions about different parts of the manuscript. I want to thank the referee for his suggestion on using the adiabatic argument method in the proof of the stability and his/her remarks that helped dropping one of the assumptions in the main result.

\end{document}